\newtheorem{theorem}{Theorem}[section]
\theoremstyle{lemma}
\newtheorem{lemma}[theorem]{Lemma}
\theoremstyle{corollary}
\newtheorem{corollary}[theorem]{Corollary}
\theoremstyle{definition}
\newtheorem{definition}[theorem]{Definition}
\theoremstyle{proposition}
\newtheorem{proposition}[theorem]{Proposition}
\theoremstyle{conjecture}
\theoremstyle{remark}
\newtheorem{remark}[theorem]{Remark}
\numberwithin{equation}{section}
\def\sideremark#1{\ifvmode\leavevmode\fi\vadjust{\vbox to0pt{\vss
			\hbox to 0pt{\hskip\hsize\hskip1em
				\vbox{\hsize1.5cm\tiny\raggedright\pretolerance10000
					\noindent #1\hfill}\hss}\vbox to8pt{\vfil}\vss}}}
\begin{document}

\title{Volume comparison with respect to scalar curvature}

\thanks{This work was supported by NSFC (Grant No. 12071489, No. 11601531, No. 11521101).}


\author{Wei Yuan}
\address{(Wei Yuan) Department of Mathematics, Sun Yat-sen University, Guangzhou, Guangdong 510275, China}
\email{yuanw9@mail.sysu.edu.cn}

\subjclass[2010]{Primary 53C20; Secondary 58J37, 53C23, 53C24.}


\dedicatory{Dedicated to Nankai Univerisity for her 100th - Anniversary.\\
	(1919 - 2019)}

\keywords{scalar curvature, volume comparison, $V$-static metric, stable Einstein metric, Bray's conjecture, Schoen's conjecture}

\begin{abstract}
In this article, we investigate the volume comparison with respect to scalar curvature. In particular, we show volume comparison holds for small geodesic balls of metrics near a $V$-static metric. For closed manifold, we prove the volume comparison for metrics near a strictly stable Einstein metric. As applications, we give a partial answer to a conjecture of Bray and recover a result of Besson, Courtois and Gallot, which partially confirms a conjecture of Schoen about closed hyperbolic manifold. Applying analogous techniques, we obtain a different proof of a local rigidity result due to Dai, Wang and Wei, which shows it admits no metric with positive scalar curvature near strictly stable Ricci-flat metrics.
\end{abstract}

\maketitle



\section{Introduction}

The volume comparison theorem is a fundamental result in Riemannian geometry. It is a powerful tool in geometric analysis and frequently used in solving various problems. \\

The classic volume comparison theorem states that the volume of a complete manifold is upper bounded by the round sphere if its Ricci curvature is lower bounded by a corresponding positive constant. A natural question is that whether we can replace the assumption on Ricci curvature by the one with scalar curvature? \\

In general, scalar curvature is not sufficient to control the volume. This is a straightforward conclusion of a result by Corvino, Eichmair and Miao \cite{C-E-M}. In order to state it, we need the following fundamental concept, which was introduced by Miao and Tam in \cite{M-T_1}:

\newtheorem*{V-static}{\bf Definition}
\begin{V-static}
Let $(M^n, \bar g)$ be an $n$-dimensional Riemannian manifold. We say $\bar g$ is a $V$-static metric if there is a smooth function $f \not\equiv 0$ and a constant $\kappa \in \mathbb{R}$ solve the following \emph{$V$-static equation}:
\begin{align}\label{eqn:V_static}
\gamma_{\bar g}^* f = \nabla^2_{\bar g} f - \bar g \Delta_{\bar g} f - f Ric_{\bar g} =\kappa \bar g,
\end{align}
where $\gamma_{\bar g}^* : C^\infty(M) \rightarrow S_2(M)$ is the formal $L^2$-adjoint of $\gamma_{\bar g}:=DR_{\bar g}$, the linearization of scalar curvature at $\bar g$. We also refer a quadruple $(M, \bar g, f, \kappa)$ to be a \emph{$V$-static space}.
\end{V-static}

\begin{remark}\label{rmk:scalar_curv_V_static}
A fundamental property of $V$-static metric is that its scalar curvature $R_{\bar g}$ is a constant for $M$ connected (see Proposition 2.1 in \cite{C-E-M}). By taking trace of the equation (\ref{eqn:V_static}), we can see that $f$ satisfies the linear elliptic equation
\begin{align}\label{eqn:V_static_trace}
\Delta_{\bar g} f + \frac{R_{\bar g}}{n-1} f + \frac{n\kappa }{n-1} =0.
\end{align}
In particular, $f$ is an eigenfunction for the Laplacian, if $\kappa = 0$. 
\end{remark}

Einstein metrics are in particular $V$-static, which can be easily seen by taking the function $f$ to be a constant. In this sense, we can view $V$-static metrics as a generalization of Einstein metrics. Another class of special $V$-static metrics are \emph{vacuum static metrics} when we take $\kappa = 0$. They can be used to construct an important category of solutions to \emph{Einstein field equation} in general relativity \cite{Q-Y_1}. The classification of $V$-static spaces is a crucial problem in understanding the interplay between scalar curvature and volume. For more results, please refer to \cite{B-R, B-D-R, C-E-M, M-T_1, M-T_2}.\\

Now we state a deformation result associated to the concept of $V$-static metric:
\begin{theorem}[Corvino, Eichmair and Miao \cite{C-E-M}]
Let $(M^n, \bar g)$ be a Riemannian manifold and $\Omega \subset M$ be a pre-compact domain with smooth boundary. Suppose $(\Omega, \bar g)$ is not $V$-static, i.e the $V$-static equation (\ref{eqn:V_static}) only admits the trivial solution : $f \equiv 0$ and $\kappa = 0$ in $C^\infty(\Omega) \times \mathbb{R}$. Then for any $\Omega_0$ compactly contained in $\Omega$, there exists a constant $\delta_0 > 0$ such that for any $(\rho, V) \in C^\infty(M) \times \mathbb{R}$ with $supp\ (R_{\bar g} - \rho) \subset \Omega_0$ and
$$||R_{\bar g} - \rho||_{C^1(\Omega, \bar g)}+ |V_\Omega(\bar g) - V| < \delta_0,$$
there exists a metric $g$ on $M$ such that $supp\ (g - \bar g) \subset \Omega$, $R_g = \rho$ and $V_\Omega(g) = V$.
\end{theorem}

This result suggests that for a non-$V$-static domain, the information of scalar curvature is not sufficient to give a volume comparison: we can take either $V > V_\Omega(\bar g)$ or $V < V_\Omega(\bar g)$, but with $\rho > R_{\bar g}$ in $\Omega$. In either case, we can find a metric $g$ realizing $(\rho, V)$ on $\Omega$ and it shows that no volume comparison holds for non-$V$-static domains.\\

However, the volume comparison with respect to scalar curvature indeed holds for some special metrics. For instance, Miao and Tam proved a rigidity result for upper hemisphere with respect to non-decreasing scalar curvature and volume. They also showed that a similar result holds for Euclidean domains \cite{M-T_2}. Note that all space forms are $V$-static, it is natural to ask whether all $V$-static spaces admit such a volume comparison result. \\

Inspired by the rigidity of vacuum static metrics \cite{Q-Y_2} and related work \cite{M-T_2}, we obtain a volume comparison theorem with respect to scalar curvature for sufficiently small geodesic balls, if appropriate boundary conditions on induced metric $g|_{T\partial B_r(p)}$ and mean curvature $H_g$ are posed. 
\newtheorem*{thm_A}{\bf Theorem A}
\begin{thm_A}\label{thm:loc_vol_comparison}
For $n\geq 3$, suppose $(M^n, \bar g, f, \kappa)$ is a $V$-static space. For any $p \in M$ with $f(p) > 0$, there exist positive constants $r_0$ and $\varepsilon_0$ such that for any geodesic ball $B_r(p) \subset M$ with radius $r \in (0, r_0)$ and metric $g$ on $B_r(p)$ satisfying
\begin{itemize}
\item $R_g \geq R_{\bar g}$ in $B_r(p)$,
\item $H_g \geq H_{\bar g}$ on $\partial B_r(p)$,
\item $g|_{T\partial B_r(p)} = \bar g|_{T\partial B_r(p)}$,
\item $||g - \bar g||_{C^2(B_r(p), \bar g)} < \varepsilon_0$,
\end{itemize} 
the following volume comparison holds:
\begin{itemize}
\item if $\kappa < 0$, then $$V_\Omega(g) \leq V_\Omega (\bar g);$$
\item if $\kappa > 0$, then $$V_\Omega(g) \geq V_\Omega (\bar g);$$
\end{itemize} 
with equality holding in either case if and only if the metric $g$ is isometric to $\bar g$.
\end{thm_A}

\begin{remark}
If $f(p) < 0$, we only need to replace $(f, \kappa)$ by $(-f, -\kappa)$, then the reversed volume comparison follows.
\end{remark}

\begin{remark}
If $\kappa = 0$, $V$-static metrics are in particular vacuum static and hence $g$ is isometric to $\bar g$ according to \cite{Q-Y_2}. Thus Theorem A is an extension for the rigidity of vacuum static metrics.
\end{remark}
\vskip 0.2in

In general, the function $f$ may change its sign on a closed $V$-static manifold. For example, we can take $f:= 1 + 2 x_{n+1}$ on the unit sphere $\mathbb{S}^n$, where $x_{n+1}$ is the height-function of $\mathbb{S}^n \hookrightarrow \mathbb{R}^{n+1}$. Hence the volume comparison may not hold in this case. However, for some special $V$-static spaces, the volume comparison with respect to scalar curvature might still hold for closed manifolds. Here and throughout this article, we refer a manifold to be \emph{closed}, if it is compact without boundary.\\

In \cite{Schoen}, Schoen proposed a well-known conjecture that the Yamabe invariant of a given closed hyperbolic manifold is achieved by its canonical metric. This problem involves all possible metrics on a given hyperbolic manifold and it is obviously very difficult to solve. However, it can be shown that this conjecture is in fact equivalent to the following volume comparison problem:
\newtheorem*{conj}{\bf Schoen's Conjecture}
\begin{conj}
	For $n \geq 3$, let $(M^n, \bar g)$ be a closed hyperbolic manifold. Then for any metric $g$ on $M$ with $$R_g \geq R_{\bar g},$$ the volume comparison $$V_M (g) \geq V_M(\bar g)$$
	holds.
\end{conj}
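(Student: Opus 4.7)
The plan is to reduce Schoen's conjecture to the assertion that the Yamabe invariant of $M$ is attained by the hyperbolic conformal class---its standard reformulation---and then attack that reformulation.

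\textbf{Step 1: Conformal reduction to constant scalar curvature.} A closed hyperbolic manifold lies in the negative Yamabe class, so for any metric $g$ on $M$ the prescribed scalar curvature equation
$$-\frac{4(n-1)}{n-2}\Delta_g u + R_g\, u = -n(n-1)\, u^{(n+2)/(n-2)}$$
admits a positive solution $u \in C^\infty(M)$, giving a conformal metric $\tilde g = u^{4/(n-2)} g$ with $R_{\tilde g} \equiv -n(n-1)$. Multiplying by $u$, integrating, and using the hypothesis $R_g \geq -n(n-1)$ yields
$$\frac{4(n-1)}{n-2}\int_M |\nabla u|^2\, dV_g \;\leq\; n(n-1)\left(\int_M u^2\, dV_g - Vol(\tilde g)\right),$$
so $\int_M u^2\, dV_g \geq Vol(\tilde g)$. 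Hölder's inequality then gives $\int_M u^2\, dV_g \leq Vol(\tilde g)^{(n-2)/n}\, Vol(g)^{2/n}$, and combining the two produces $Vol(\tilde g) \leq Vol(g)$. It therefore suffices to prove $Vol(\tilde g) \geq Vol(\bar g)$ for an arbitrary metric $\tilde g$ on $M$ with $R_{\tilde g} \equiv -n(n-1)$.

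\textbf{Step 2: Sigma invariant identity.} For such a constant scalar curvature $\tilde g$, the Yamabe constant of its conformal class equals $Y([\tilde g]) = -n(n-1)\, Vol(\tilde g)^{2/n}$, and likewise $Y([\bar g]) = -n(n-1)\, Vol(\bar g)^{2/n}$, so the remaining comparison $Vol(\tilde g) \geq Vol(\bar g)$ is equivalent to $Y([\tilde g]) \leq Y([\bar g])$. This will follow from the identity
$$\sigma(M) := \sup_{[g']} Y([g']) = Y([\bar g]),$$
asserting that the hyperbolic conformal class attains the Yamabe invariant of $M$.

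\textbf{Main obstacle.} Step 2 is the crux. In dimension $3$ it follows from Perelman's monotonicity of the $\bar\lambda$-invariant under Ricci flow combined with geometrization, and in dimension $4$ Seiberg-Witten invariants give partial results in restricted topological settings after LeBrun. In general dimension, I would attempt to construct, for each competing conformal class $[g']$ represented by its Yamabe minimizer $g''$, a degree-one map $F : (M, g'') \to (M, \bar g)$ whose Jacobian is controlled pointwise by $R_{g''}$---a scalar-curvature-adapted analogue of the Besson-Courtois-Gallot barycenter construction, whose natural pairing is with volume entropy rather than with scalar curvature. Producing such an $F$, or equivalently upgrading a pointwise lower bound on $R_{g'}$ to a sharp upper bound on $Y([g'])$, is the heart of Schoen's conjecture; once it is in hand, Step 1 closes the argument, with the rigidity case $Vol(g) = Vol(\bar g)$ forcing equality in both the Hölder estimate and the sigma-invariant bound, hence $u \equiv 1$ and $\tilde g = \bar g$.
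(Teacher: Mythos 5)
You have not proved the statement, and you could not have with these tools: the statement is precisely Schoen's conjecture, which the paper itself records as open for $n \geq 4$ (it is a \emph{conjecture} environment, not a theorem, and the paper's own contribution, Corollary B, is only the perturbative version requiring $||g - \bar g||_{C^2(M,\bar g)} < \varepsilon_0$). Your Step 1 is correct and standard: in the negative Yamabe class the Yamabe equation is solvable, and multiplying by $u$, integrating, and applying H\"older does yield $Vol(\tilde g) \leq Vol(g)$, reducing the conjecture to the assertion $\sigma(M) = Y([\bar g])$. This is exactly the equivalence the paper mentions in the introduction when it says the conjecture is equivalent to the Yamabe invariant of a closed hyperbolic manifold being achieved by the hyperbolic metric. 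So Step 1 moves the problem to its standard reformulation but does not advance it.

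The genuine gap is Step 2, and you acknowledge it yourself: producing a degree-one map whose Jacobian is pointwise controlled by scalar curvature \emph{is} the conjecture, not a route to it. The Besson--Courtois--Gallot barycenter method controls the Jacobian by volume entropy, and entropy is in turn controlled by Ricci curvature (via Bishop comparison); no known mechanism substitutes the much weaker hypothesis of a scalar curvature lower bound, which is why the problem is open in dimensions $n \geq 4$. For comparison, the paper's proof of the statement it can actually prove (Corollary B) proceeds entirely differently and is intrinsically local: after fixing the gauge by Ebin's slice theorem so that $h = \varphi^* g - \bar g$ is divergence-free, either $\gamma_{\bar g} h \not\equiv 0$, in which case the first variation of the functional $\mathscr{F}_{M, \bar g}[g] = \int_M R(g)\, dv_{\bar g} + 2(n-1)\lambda\, Vol_M(g)$ at the critical point $\bar g$ already forces $Vol_M(g) > Vol_M(\bar g)$, or $\gamma_{\bar g} h \equiv 0$, in which case a second-variation estimate (using the refined Bochner-type inequality for symmetric $2$-tensors and the Weyl bound $||W||_{L^\infty} < -(3n-4)\lambda$, trivially satisfied when $W = 0$) forces $h = 0$ and hence rigidity. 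That argument uses Taylor expansion of $R$ and $Vol$ at $\bar g$ and therefore cannot be globalized to remove the closeness assumption; neither it nor your proposal touches the actual conjecture.
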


The equivalence of aforementioned Schoen's conjectures are known by experts. For the convenience of readers, we include a proof in the appendix.\\

Schoen's conjecture is known holding for $3$-manifolds due to works of Hamilton on non-singular Ricci flow \cite{Hamilton} and Perelman on geometrization of 3-manifolds \cite{Perelman_1, Perelman_2} (also see \cite{A-S-T} for a generalization). For higher dimensions, Besson, Courtois and Gallot verified it for metrics $C^2$-close to the canonical metric \cite{B-C-G_1}. They also proved that the volume comparison holds without assuming $g$ is close to $\bar g$, if one replace the assumption on scalar curvature by Ricci curvature \cite{B-C-G_2}, which can be viewed as an evidence that Schoen's conjecture holds for higher dimensions. \\

For the case of positive curvature, Bray proposed the following conjecture:
\newtheorem*{conj_B}{\bf Bray's Conjecture}
\begin{conj_B}
	For $n \geq 3$, there is a positive constant $\varepsilon_n < 1$ such that for any complete manifold $(M^n, g)$ with scalar curvature $$R_g \geq n(n-1)$$ and Ricci curvature $$Ric_g \geq \varepsilon_n (n-1) g,$$ the volume comparison $$V_M (g) \leq V_{\mathbb{S}^n}(g_{_{\mathbb{S}^n}})$$
	holds, where $\mathbb{S}^n$ is the unit round sphere and $g_{_{\mathbb{S}^n}}$ is the canonical metric.
\end{conj_B}

\begin{remark}
	Unlike Schoen's conjecture, there is an additional assumption on Ricci curvature in the positive curvature case. In fact, this assumption is necessary, see \cite{Bray} for more details.
\end{remark}
\vskip 0.2in

	 For this conjecture, Bray \cite{Bray} verified it for three dimensional manifolds and give an estimate for $\varepsilon_3$. Later, Gursky and Viaclovsky \cite{Gursky_Viaclovsky} showed that $\varepsilon_3 \leq \frac{1}{2}$  and Brendle \cite{Brendle} proved the rigidity of volume comparison for $\varepsilon_3 = \frac{1}{2}$. For higher dimensions, Zhang gave a partial answer in \cite{Zhang}.\\

Before stating our result, we first recall the following well-known concept associated to an Einstein metric:

\begin{definition}[Stability of Einstein metric]
	For $n\geq 3$, suppose $(M^n, \bar g)$ is a closed Einstein manifold. The metric $\bar g$ is said to be \emph{stable}, if 
	\begin{align}\label{def:Einstein_stability}
		\min \text{spec}_{_{TT}} ( - \Delta_E^{\bar g}) = \underset{0\not\equiv h \in S_2^{_{TT}}(M)}{\inf} \frac{\int_M \langle h, -\Delta_E^{\bar g} h \rangle_{\bar g} dv_{\bar g}}{\int_M |h|^2_{\bar g} dv_{\bar g}} \geq 0,
	\end{align}	
	where $\Delta_E^{\bar g} : = \Delta_{\bar g} + 2Rm_{\bar g}$ is the \emph{Einstein operator} and
	$$S_{2,\bar g}^{_{TT}}(M):= \{ h \in S_2(M) : \delta_{\bar g} h = 0,\ tr_{\bar g} h = 0\}$$
	is the space of \emph{transverse-traceless symmetric $2$-tensors} on $(M,\bar g)$. Moreover, $\bar g$ is called \emph{strictly stable} if the inequality in (\ref{def:Einstein_stability}) is strict.
\end{definition}

Stability is a crucial concept in the study of Einstein manifolds. There are several equivalent way to define it, we adopt the one involving the Einstein operator for our convenience. For more information, please refer to \cite{Besse, D-W-W_1, D-W-W_2, Kroncke}.\\

Our main result about volume comparison for Einstein manifolds is the following:
\newtheorem*{thm_B}{\bf Theorem B}
\begin{thm_B}\label{thm:Einstein_vol_comparison}
Suppose $(M^n, \bar g)$ is a closed strictly stable Einstein manifold with $$Ric_{\bar g} = (n-1)\lambda \bar g,$$ where $\lambda \neq 0$ is a constant. There exists a constant  $\varepsilon_0 > 0$ such that for any metric $g$ on $M$ which satisfies 
$$R_g \geq n(n-1)\lambda$$
and
$$||g - \bar g||_{C^2(M, \bar g)} < \varepsilon_0,$$
the following volume comparison holds:
\begin{itemize}
	\item if $\lambda > 0$, then $$V_M (g) \leq V_M (\bar g);$$
	\item if $\lambda < 0$, then $$V_M (g) \geq V_M (\bar g).$$
\end{itemize} 
Moreover, the equality holds in either case if and only if the metric $g$ is isometric to $\bar g$.
\end{thm_B}

\begin{remark}
	Suppose the reference metric $\bar g$ is K\"ahler-Einstein with negative scalar curvature and all infinitesimal complex deformations of its complex structure are integrable. Applying a delicate utilization of the functional
	\begin{align*}
		K (g) = \int_M |R_g|^{\frac{n}{2}} dv_g
	\end{align*}
	and the Yamabe functional
	\begin{align*}
		Y(g) = \frac{\int_M R_g dv_g}{(V_M (g))^{\frac{n-2}{n}}},
	\end{align*}
	Dai, Wang and Wei proved that the volume comparison with respect to scalar curvature holds for metrics near $\bar g$ (see Theorem 1.5 in \cite{D-W-W_2}). In fact, their result can be extended to strictly stable Einstein metrics with negative scalar curvature.
\end{remark}

\begin{remark}
	The above volume comparison does not hold for Ricci-flat metrics: by taking $g = c^2 \bar g$ for a constant $c > 0$, we have the scalar curvature $R_g = R_{\bar g} = 0$, but the volume $V_M(g)$ can be either larger or smaller than $V_M(\bar g)$ depending on either $c > 1$ or $c < 1$.
\end{remark}

\begin{remark}
	The stability assumption in the theorem is necessary. Macbeth constructed an example of Einstein manifold which shows the volume comparison fails if we lack stability \cite{Macbeth}. See Proposition \ref{prop:heather_example} for more details.
\end{remark}

\begin{remark}
	Our approach in fact works for other curvatures as well. Please see \cite{Lin-Yuan} for a volume comparison theorem of $Q$-curvature for strictly stable positive Einstein manifolds.
\end{remark}
\vskip 0.2in

It is well-known that hyperbolic metrics are strictly stable as special Einstein metrics and hence Theorem B provides a partial answer to Schoen's conjectures, which recovers the result in \cite{B-C-G_1}:
\newtheorem*{cor_A}{\bf Corollary A}
\begin{cor_A}[Besson, Courtois and Gallot \cite{B-C-G_1}]\label{cor:loc_vol_comparison_hyperbolic_mfd}
For $n \geq 3$, let $(M^n, \bar g)$ be a closed hyperbolic manifold. There exists a constant $\varepsilon_0 > 0$ such that for any metric $g$ on $M$ with scalar curvature $$R_g \geq R_{\bar g}$$ and $$||g - \bar g||_{C^2(M, \bar g)} < \varepsilon_0,$$ we have $$V_M (g) \geq V_M(\bar g)$$
with equality holds if and only if the metric $g$ is isometric to $\bar g$.\\
\end{cor_A}

Similarly, the spherical metric is also strictly stable (Example 3.1.2 in \cite{Kroncke}), we obtain a partial answer to Bray's conjecture:
\newtheorem*{cor_B}{\bf Corollary B}
\begin{cor_B}\label{cor:loc_vol_comparison_sphere}
For $n \geq 3$, let $(\mathbb{S}^n, g_{_{\mathbb{S}^n}})$ be the unit round sphere. There exists a constant $\varepsilon_0 > 0$ such that for any metric $g$ on $\mathbb{S}^n$ with scalar curvature $$R_g \geq n(n-1)$$ and $$||g - g_{_{\mathbb{S}^n}}||_{C^2(\mathbb{S}^n, g_{_{\mathbb{S}^n}})} < \varepsilon_0,$$ we have $$V_{\mathbb{S}^n} (g) \leq V_{\mathbb{S}^n}(g_{_{\mathbb{S}^n}})$$
with equality holds if and only if the metric $g$ is isometric to $g_{_{\mathbb{S}^n}}$.
\end{cor_B}

\begin{remark}
	For metrics close to the canonical spherical metric, the assumption on Ricci curvature in Bray's conjecture holds automatically.
\end{remark}

\begin{remark}
	Corvino, Eichmair and Miao constructed a metric on the upper hemisphere which satisfies the scalar comparison but has arbitrarily large volume (see Proposition 6.2 in \cite{C-E-M}). In fact, by gluing a lower hemisphere, we can get a metric on the whole sphere with scalar curvature no less than $n(n-1)$ but has larger volume. 
\end{remark}
\vskip 0.2in

In the research of scalar curvature, a fundamental question is that whether a given manifold admits a metric of positive scalar curvature. A well-known result due to Schoen and Yau \cite{S-Y_1, S-Y_2}, Gromov and Lawson \cite{G-L_1, G-L_2} is the rigidity of tori, which states that there is no metric of positive scalar curvature on tori. For an excellent survey, please refer to \cite{Rosenberg}. \\

In \cite{D-W-W_1}, Dai, Wang and Wei studied the existence of metrics with positive scalar curvature on a Riemannian manifold with nonzero parallel spinors. Through investigations of variational properties for the first eigenvalue of conformal Laplacian, they proved the local rigidity of scalar curvature near the reference metric. Note that their proof is in fact can be applied to closed strictly stable Ricci-flat manifolds.\\

Applying similar techniques of the argument for Theorem B, we obtain the local rigidity of strictly stable Ricci-flat manifolds, which generalizes a result of Fischer and Marsden about local rigidity of tori\cite{F-M} with a different approach than \cite{D-W-W_1}:
\newtheorem*{thm_C}{\bf Theorem C}
\begin{thm_C}[Dai, Wang and Wei \cite{D-W-W_1}]\label{thm:Ricci_flat_rigidity}
	Suppose $(M^n, \bar g)$ is a strictly stable Ricci-flat manifold, then there exists a constant $\varepsilon_0 > 0$ such that for any metric $g$ on $M$ satisfies $$R_g \geq 0$$
	and 
	$$||g - \bar g||_{C^2(M, \bar g)} < \varepsilon_0,$$
	then $g$ is homothetic to $\bar g$. That is, we can find a constant $c > 0$ such that $g = c^2\bar g$. In particular, there is no metric with positive scalar curvature near $\bar g$. 
\end{thm_C}

\begin{remark}
	Note that flat tori are merely stable, since the kernel of Einstein operator is non-trivial and in fact $$\dim \ \ker \Delta_E^{\bar g} \geq \frac{n(n+1)}{2} - 1.$$ It is interesting to see whether there is an example of closed stable Ricci-flat manifold which admits a metric of positive scalar curvature near the reference metric. 
\end{remark}

\begin{remark}
	Similar to Theorem B, our approach can also be applied to other curvatures. Please see \cite{Lin-Yuan} for an analogous result for $Q$-curvature.
\end{remark}
\vskip 0.2in

The article is organized as follow: In Section 2, we collect notations and conventions used frequently in this article. In Section 3, we calculate some geometric variational formulas involved in next two sections. In Section 4, we study the volume comparison for geodesic balls in $V$-static spaces. In Section 5, we investigate the volume comparison for non-Ricci-flat strictly stable Einstein manifolds and the rigidity phenomenon of strictly stable Ricci-flat manifolds. In Appendix A, we present a proof for equivalence of two conjectures proposed by Schoen.\\

\paragraph{\textbf{Acknowledgement}}

The author would like to express his appreciations to Professor Miao Pengzi for suggesting this interesting problem. The author also would like to thank Professors Chen Bing-Long, Lin Yueh-Ju, Heather Macbeth, Qing Jie, Wei Guofang, Zhang Hui-Chun and Zhu Xi-Ping for their interests in this problem and inspiring discussions. In particular, the author would like to express his appreciations to Professor Zhang Hui-Chun for pointing out the work \cite{B-C-G_1}, Professor Zhu Xi-Ping for explaining the idea of the work \cite{A-S-T} and also many other valuable comments, Professor Heather Macbeth for showing the counter example for unstable Einstein manifolds and suggestions for considering the stability of Einstein metrics, Professor Wei Guofang for valuable comments. The author also would like to thank the referee for many valuable comments and suggestions on this article.\\


\section{Notations and conventions}

In this section, we collect notations frequently used and conventions adopted in this article for the convenience of readers. Please note that {\bfseries{all calculations are performed in the reference metric $\bar g$.}}\\

Let $(\Omega^n, \bar g)$ be an $n$-dimensional compact manifold possibly with $C^1$-boundary $\Sigma:=\partial \Omega$:
\begin{enumerate}
\item Indices of coordinates components:
\begin{itemize}
	\item Greek indices run through $1, \cdots, n$;
	
	\item Latin indices run through $1, \cdots, n-1$.
\end{itemize}

\item Connections:
\begin{itemize}
	\item connection on $\Omega$ with respect to $\bar g$:\quad $\nabla_{\bar g}$;
	
	\item connection on $\Sigma$ with respect to $\bar g|_{_{T\Sigma}}$:\quad $\nabla^{\Sigma}$.
\end{itemize}

\item Volume forms:
\begin{itemize}
	\item volume form on $\Omega$ with respect to $\bar g$:\quad $dv_{\bar g}$;
	
	\item volume form on $\Sigma$ with respect to $\bar g|_{_{T\Sigma}}$:\quad $d\sigma_{\bar g}$.
\end{itemize}

\item Curvatures:
\begin{itemize}
	\item Riemann curvature tensor $Rm_{\bar g}$:\quad  $R_{\alpha \beta \gamma \delta}$;
	
	\item Ricci curvature tensor $Ric_{\bar g}$:\quad  $R_{\beta \gamma} = \bar g^{\alpha \delta} R_{\alpha \beta \gamma \delta}$;
	
	\item scalar curvature $R_{\bar g}$:\quad  $R_{\bar g} = \bar g^{\beta \gamma} R_{\beta \gamma}$;
	
	\item second fundamental form $A_{\bar g}$:\quad  $A_{ij}^{\bar g} = \frac{1}{2} \partial_{{\nu_{\bar g}}}\bar g_{ij}$;
	
	\item mean curvature $H_{\bar g}$:\quad  $H_{\bar g} = \bar g^{ij} A_{ij}^{\bar g}$. 
	
\end{itemize}

\item Spaces:
\begin{itemize}
	\item space of all smooth Riemannian metrics on $\Omega$:\quad $\mathcal{M}_\Omega$;
	
	\item space of all smooth diffeomorphisms of $\Omega$:\quad $\mathscr{D}(\Omega)$;
	
	\item a local slice through the metric $\bar g$:\quad $\mathcal{S}_{\bar g}$;
	
	\item symmetric $2$-tensors on $\Omega$:\quad  $S_2(\Omega)$;
	
	\item $TT$-tensors on $(\Omega, \bar g)$:\quad  $S_{2,\bar g}^{_{TT}}(\Omega) = \{ h \in S_2(\Omega): \ \delta_{\bar g} h = 0, \ tr_{\bar g} h = 0 \}$.
\end{itemize}

\item Operators:
\begin{itemize}
	\item Multiplication and inner product of symmetric $2$-tensors:
	\begin{align*}
		(h \times k)_{\alpha \delta}:= \bar g^{\beta \gamma} h_{\alpha \beta} k_{\gamma \delta}
	\end{align*}
	and 
	\begin{align*}
		 \langle h, k \rangle_{\bar g} = h \cdot k := \bar g^{\alpha \delta} (h \times k)_{\alpha \delta} = \bar g^{\alpha \delta} \bar g^{\beta \gamma} h_{\alpha \beta} k_{\gamma \delta} .
	\end{align*}
	In particular,
	\begin{align*}
		(h^2)_{\alpha\beta} = \bar{g}^{\gamma\delta}h_{\alpha \gamma} h_{\delta\beta} 
	\end{align*}
	and
	\begin{align*}
		Ric_{\bar g} \cdot h : = R_{\beta \gamma} h^{\beta \gamma}.
	\end{align*}
	\item Riemann curvature tensor as operator on symmetric $2$-tensors: 
	\begin{align*}
		(Rm_{\bar g} \cdot h)_{\beta\gamma}:= R_{\alpha \beta \gamma \delta} h^{\alpha \delta}
	\end{align*}
	and
	\begin{align*}
		\langle Rm_{\bar g} \cdot h, h\rangle_{\bar g}: = R_{\alpha\beta\gamma\delta}h^{\alpha\delta}h^{\beta\gamma}.
	\end{align*}
	
	\item A combination involving curvature:
	$$\mathscr{R}_{\bar g}(h, h)  := \langle Rm_{\bar g} \cdot h, h \rangle_{\bar g}  + 2(Ric_{\bar g}\cdot h)(tr_{\bar g} h) -\frac{2 R_{\bar g}}{n-1}(tr_{\bar g} h)^2 .$$
	
	\item Formal $L^2$-adjoint of covariant differentiation:
	\begin{align*}
		\delta_{\bar g} = - div_{\bar g}: \quad (\delta_{\bar g} h)_\beta = - \nabla^\alpha_{\bar g} h_{\alpha\beta}.
	\end{align*}	
	\item Einstein operator: 
	\begin{align*}
		\Delta_E^{\bar g} h = \Delta_{\bar g} h + 2 Rm_{\bar g} \cdot h.
	\end{align*}	
	\item Linearization of scalar curvature: 
	\begin{align*}
		\gamma_{\bar g} h = - \Delta_{\bar{g}} (tr_{\bar{g}} h) + \delta_{\bar{g}}^2 h - Ric_{\bar{g}} \cdot h.
	\end{align*}	
	\item Formal $L^2$-adjoint of $\gamma_{\bar g}$ : 
	\begin{align*}
		\gamma_{\bar g}^* f = \nabla_{\bar g}^2 f - \bar g \Delta_{\bar g} f - f Ric_{\bar g}.
	\end{align*}
\end{itemize}
\end{enumerate}

\ \\


\section{Geometric variational formulas}

In this section, we give variational formulas for geometric functionals involved later in the argument. Throughout this section, $\Omega$ is assumed to be a compact manifold possibly with $C^1$-boundary $\Sigma:= \partial \Omega$. In case of $\Sigma \neq  \emptyset$, let 
$$\{ e_1, \cdots, e_{n-1}, e_n = {\nu_{\bar g}}\}$$ 
be an orthonormal frame on $\Sigma$ such that $\{ e_i \}_{i=1}^{n-1}$ are tangent to $\Sigma$ and ${\nu_{\bar g}}$ is the outward normal vector field of $\Sigma$ with respect to the metric $\bar g$. We also denote the induced connection on $\Sigma$ by $\nabla^\Sigma$. \\

We begin with recalling well-known variational formulas of scalar curvature (for detailed calculations, please refer to \cite{F-M, Yuan}):
\begin{lemma}\label{lem:scalar_variation_formulae}
	The first and second variations of scalar curvature are
	\begin{align}
	DR_{\bar{g}} \cdot h = - \Delta_{\bar{g}} (tr_{\bar{g}} h) + \delta_{\bar{g}}^2 h - Ric_{\bar{g}} \cdot h,
	\end{align}
	and 
	\begin{align}
	 D^2 R_{\bar{g}} \cdot (h,h) =& -2 \gamma_{\bar{g}} (h^2) - \Delta_{\bar g} |h|_{\bar{g}}^2 - \frac{1}{2} |\nabla_{\bar{g}}h|_{\bar{g}}^2 - \frac{1}{2}|d(tr_{\bar{g}}h)|_{\bar{g}}^2\\ \notag &+ 2 \langle h, \nabla^2_{\bar{g}} (tr_{\bar{g}} h) \rangle_{\bar{g}} - 2 \langle \delta_{\bar{g}} h, d(tr_{\bar{g}} h) \rangle_{\bar{g}} + \nabla_{\alpha} h_{\beta\gamma} \nabla^{\beta} h^{\alpha\gamma}
	\end{align}
	for any $h \in S_2(\Omega)$.\\
\end{lemma}

For mean curvature, its variations for fixed induced boundary metric are given as follow, which was first shown by Brendle and Marques in \cite{B-M}:
\begin{lemma}\label{lem:mean_curv_variations}
	The first and second variations of mean curvature are 
	\begin{align}
	DH_{\bar g} \cdot h =\ \frac{1}{2} h_{nn} H_{\bar{g}} - \nabla_i h_n^{\ i} + \frac{1}{2} \nabla_n h_i^{\ i}
	\end{align}
	and
	\begin{align}
	D^2H_{\bar{g}} \cdot (h, h) =  \left( -\frac{1}{4} h^2_{nn} + \sum_{i=1}^{n-1} h^2_{in} \right) H_{\bar{g}} + h_{nn}\left( \nabla_i h_n^{\ i} - \frac{1}{2}\nabla_n h_i^{\ i}\right)
	\end{align}
	for any $h \in S_2(\Omega)$ with $\left.h\right|_{T\partial \Omega} \equiv 0$.\\
\end{lemma}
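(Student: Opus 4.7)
The plan is to treat $H$ as a functional of the ambient metric while keeping $\Sigma$ fixed as a submanifold, then expand to second order in $t$ along $g(t) = \bar g + th$. The computation splits naturally into three parts: (a) the $t$-dependence of the outward unit normal $\nu(t)$, (b) the $t$-dependence of the ambient Levi--Civita connection, and (c) the projection onto $T\Sigma$. The hypothesis $h|_{T\Sigma} = 0$ eliminates part (c), because $g(t)|_{T\Sigma} = \bar g|_{T\Sigma}$ holds as an identity in $t$, so $\{e_i\}_{i<n}$ remains an orthonormal frame on $\Sigma$ for every $t$ and the induced metric on $\Sigma$ is genuinely frozen.

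I would begin by parameterizing $\nu(t) = a(t)\bar\nu + b^i(t) e_i$ and enforcing $g(t)(\nu(t), e_j) = 0$ together with $g(t)(\nu(t), \nu(t)) = 1$. Under $h|_{T\Sigma} = 0$, orthogonality collapses to $b^j(t) = -a(t)\, t\, h_{nj}$ and normalization gives $a(t)^2 = \bigl(1 + t h_{nn} - t^2 \sum_i h_{in}^2\bigr)^{-1}$, whose Taylor expansions yield
\[
a(t) = 1 - \tfrac{t}{2} h_{nn} + \tfrac{3 t^2}{8} h_{nn}^2 + \tfrac{t^2}{2} \sum_i h_{in}^2 + O(t^3), \qquad b^j(t) = -t h_{nj} + \tfrac{t^2}{2} h_{nn} h_{nj} + O(t^3).
\]
From the formula $H(t) = \sum_{i=1}^{n-1} g(t)\bigl(\nabla^{g(t)}_{e_i}\nu(t),\, e_i\bigr)$ I would then apply the product rule on $\nu(t)$ and invoke the standard variation of Christoffel symbols
\[
D\Gamma^k_{ij}\cdot h = \tfrac{1}{2}\bigl(\nabla_i h_j^{\ k} + \nabla_j h_i^{\ k} - \nabla^k h_{ij}\bigr).
\]

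Differentiating at $t = 0$, the first variation decomposes into three contributions: the piece proportional to $a'(0) H_{\bar g}$ supplies the $\tfrac{1}{2} h_{nn} H_{\bar g}$ term (up to the sign convention fixed for $H$); the variation of the connection paired with $\bar\nu$ produces the two divergence pieces $-\nabla_i h_n^{\ i} + \tfrac{1}{2}\nabla_n h_i^{\ i}$; and several tangential contributions (involving $e_i(a'(0))$ and $b^j$ paired with intrinsic Christoffel symbols) vanish at first order because $g(0)(\bar\nu, e_i) = 0$ and $b^j(0) = 0$. For the second variation I would Taylor-expand each factor to order $t^2$ and collect the coefficient of $\tfrac{1}{2} t^2$: the $a''(0)$ contribution combined with $(Da)^2$-type corrections gives $-\tfrac{1}{4} h_{nn}^2 H_{\bar g}$; the products $b^j b^k$ funnelled through the second fundamental form of $\Sigma$ furnish $\sum_i h_{in}^2 H_{\bar g}$; and the mixed $Da \cdot D\Gamma$ cross term delivers $h_{nn}\bigl(\nabla_i h_n^{\ i} - \tfrac{1}{2}\nabla_n h_i^{\ i}\bigr)$. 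The main obstacle is organizational: the second-order expansion contains many cross terms, and one must carefully separate tangential from normal derivatives of $h$, repeatedly using that $h_{ij} = 0$ on $\Sigma$ while $\nabla_k h_{ij}$ need not vanish. A cleaner alternative is to pass to Fermi coordinates $(x^1, \dots, x^{n-1}, r)$ adapted to $\Sigma$ with $\bar\nu = \partial_r$, so that $H(t) = \tfrac{1}{2}\, g^{ij}(t)\, \partial_r g_{ij}(t)/\sqrt{g^{rr}(t)}$; both variations then reduce to routine Taylor expansions of explicit components of $g(t)^{-1}$, with signs transparent from the coordinate formulas.
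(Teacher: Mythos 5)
The paper offers no proof of this lemma---it is quoted verbatim from Brendle--Marques \cite{B-M}---so your proposal can only be measured against the source, and your main strategy (expanding the unit normal $\nu(t)=a(t)\bar\nu+b^i(t)e_i$ against the constraints $g(t)(\nu(t),e_j)=0$ and $g(t)(\nu(t),\nu(t))=1$, then combining with the first variation of the Christoffel symbols) is essentially that computation. Your expansions of $a(t)$ and $b^j(t)$ are correct, and the first variation does close up: the Christoffel term contributes $\tfrac12\nabla_n h_i^{\ i}$, the $a'(0)$ term contributes $-\tfrac12 h_{nn}H_{\bar g}$, and the $b'(0)$ term contributes the intrinsic divergence $-\nabla_i^\Sigma h_n^{\ i}$. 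But the discrepancy you wave away as ``the sign convention fixed for $H$'' is not a convention issue: the stated formula uses the \emph{ambient} divergence, and since $\nabla_i h_n^{\ i}=\nabla_i^\Sigma h_n^{\ i}+H_{\bar g}h_{nn}$ (an identity the paper itself uses in the proof of Proposition \ref{prop:F_critical_pt}), one has $-\nabla_i^\Sigma h_n^{\ i}-\tfrac12 h_{nn}H_{\bar g}=-\nabla_i h_n^{\ i}+\tfrac12 h_{nn}H_{\bar g}$, which is exactly the claimed expression. At second order the plan is viable but your term-by-term attribution is off: the $\sum_i h_{in}^2\,H_{\bar g}$ piece comes from the $t^2$-coefficient $\tfrac12\sum_i h_{in}^2$ of $a(t)$ multiplying $H_{\bar g}$ (doubled when passing from the Taylor coefficient to $D^2H$), whereas the genuine $b^jb^k$ terms routed through the second fundamental form produce contributions of the form $A^{ij}h_{in}h_{jn}$ that must cancel among themselves, since no $A_{ij}$ survives in the final formula; those cancellations need to be exhibited, not asserted.

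The one step that would actually fail is your ``cleaner alternative.'' The coordinate formula $H(t)=\tfrac12\,g^{ij}(t)\,\partial_r g_{ij}(t)/\sqrt{g^{rr}(t)}$ for the mean curvature of the level set $\{r=0\}$ is valid only when the shift components $g_{ri}(t)$ vanish. Here $g_{ri}(t)=t\,h_{in}$ is nonzero in general, and the omitted shift contributions are precisely what generate the $-\nabla_i h_n^{\ i}$ term at first order and the $\sum_i h_{in}^2\,H_{\bar g}$ term at second order; the shortcut would therefore reproduce only the $h_{nn}$-dependent parts of both formulas. Stay with the frame computation.
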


For the volume functional, we provide a proof mainly based on a technique from linear algebra, which would be useful in calculating higher order variational formulas.
\begin{lemma}\label{lem:vol_variations}
	The first and second variations of volume are 
	\begin{align}
	DV_{\Omega, \bar g} \cdot h = \frac{1}{2} \int_\Omega (tr_{\bar g} h) dv_{\bar g}
	\end{align}
	and
	\begin{align}
	D^2V_{\Omega, \bar g} \cdot (h, h) = \frac{1}{4} \int_{\Omega} \left[ (tr_{\bar g} h)^2 - 2|h|_{\bar g}^2 \right]dv_{\bar g}
	\end{align}
	for any $h \in S_2(\Omega)$.
\end{lemma}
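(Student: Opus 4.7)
The plan is to prove both variational formulae by differentiating the volume form $dv_{g(t)} = \sqrt{\det(g_{\alpha\beta}(t))}\, dx$ pointwise in local coordinates along the linear family $g(t) = \bar g + th$, and then integrating over $\Omega$. Since the statement is purely algebraic in $h$ at each point, no boundary terms or integrations by parts arise, so everything reduces to manipulating determinants.

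First I would invoke Jacobi's formula $\frac{d}{dt} \log \det A(t) = \operatorname{tr}\!\big(A(t)^{-1}\dot A(t)\big)$ with $A(t) = (g_{\alpha\beta}(t))$ and $\dot A(t) = h$, obtaining
\begin{align*}
\frac{d}{dt}\sqrt{\det g(t)} \;=\; \tfrac{1}{2}\operatorname{tr}\!\big(g(t)^{-1}h\big)\sqrt{\det g(t)}.
\end{align*}
Evaluating at $t=0$ yields $\frac{d}{dt}\big|_{t=0} dv_{g(t)} = \frac{1}{2}\,\operatorname{tr}_{\bar g}(h)\, dv_{\bar g}$, and integrating over $\Omega$ produces the first variation formula.

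For the second variation I would differentiate the above expression once more with respect to $t$. Since $\dot g(t) \equiv h$ is independent of $t$, the only $t$-dependence is in $g(t)^{-1}$ and in $\sqrt{\det g(t)}$. Using the standard identity $\frac{d}{dt}g(t)^{-1} = -\, g(t)^{-1}\,h\,g(t)^{-1}$, the inner trace term contributes $-|h|^2_{\bar g}$ at $t=0$, while differentiating the square root contributes an extra factor of $\frac{1}{2}\operatorname{tr}_{\bar g}(h)$. Combining these gives
\begin{align*}
\frac{d^2}{dt^2}\bigg|_{t=0}\!\sqrt{\det g(t)} \;=\; \Big(\tfrac{1}{4}(\operatorname{tr}_{\bar g} h)^2 - \tfrac{1}{2}|h|_{\bar g}^2\Big)\sqrt{\det \bar g},
\end{align*}
and integrating over $\Omega$ yields the claimed second variation.

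There is no real obstacle here beyond bookkeeping: one must correctly track the factors of $\tfrac{1}{2}$ produced by the square root and apply the derivative-of-inverse identity with the right sign. The computation is entirely local and algebraic, in contrast to the scalar curvature and mean curvature variations of Lemma 2.1 and Lemma 2.2, which involve genuine differential operators on $\Omega$ and $\Sigma$.
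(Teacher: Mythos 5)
Your proof is correct and follows essentially the same route as the paper: both arguments reduce the statement to a pointwise second-order expansion of the volume density $\sqrt{\det(\bar g + th)}$ and then integrate over $\Omega$, with no boundary terms. The only difference is cosmetic --- the paper expands $\det(\bar g+h)$ explicitly via the elementary symmetric functions in normal coordinates, while you differentiate along the path using Jacobi's formula and the derivative-of-inverse identity; the resulting coefficients agree.
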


\begin{proof}
	Let $A$ be an $n \times n$ symmetric matrix. Its characteristic polynomial is given by
	\begin{align*}
	p_A(\lambda) &= \det (\lambda I - A ) \\
	&=  \sum_{k=0}^n (-1)^k \sigma_k(A) \lambda^{n-k}\\
	&= \lambda^n - (tr A) \lambda^{n-1} + \frac{1}{2} ((tr A)^2 - tr A^2) \lambda^{n-2} + \sum_{k=3}^n (-1)^k \sigma_k(A) \lambda^{n-k},
	\end{align*}
	where $\sigma_k(A)$ is the $k^{th}$-elementary symmetric polynomial associated to the matrix $A$.
	
	Choosing normal coordinates with respect to $\bar g$ centered at an interior point $x \in \Omega$, so that $\bar g_{\alpha\beta} = \delta_{\alpha\beta}$ at $x$. From the linear algebra fact mentioned above, we have the expansion
	$$\det (\bar g + h) = 1 + (tr_{\bar g} h) + \frac{1}{2} ( (tr_{\bar g} h)^2 - |h|_{\bar g}^2 ) + O(|h|_{\bar g}^3)$$ and hence $$\sqrt{\det ( \bar g + h )} =  1 + \frac{1}{2}(tr_{\bar g} h) + \frac{1}{8} ( (tr_{\bar g} h)^2 - 2 |h|_{\bar g}^2 )  + O(|h|_{\bar g}^3).$$
	Immediately, it implies
	\begin{align*}
	DV_{\Omega, \bar g} \cdot h = \frac{1}{2} \int_\Omega (tr_{\bar g} h) dv_{\bar g}
	\end{align*}
	and
	\begin{align*}
	D^2V_{\Omega, \bar g} \cdot ( h, h) = \frac{1}{4} \int_\Omega \left( (tr_{\bar g} h)^2 - 2 |h|_{\bar g}^2 \right) dv_{\bar g}
	\end{align*} 
	respectively.
\end{proof}

\vskip 0.2in

In the rest of this section, we calculate variational formulas for some particularly designed functionals involving scalar curvature, mean curvature and volume.

\begin{proposition}\label{prop:int_scalar_first variation}
	For any $h \in S_2(\Omega)$ and $f\in C^\infty(\Omega)$, 
	\begin{align*}
	& \int_\Omega \left(DR_{\bar g} \cdot h\right) f dv_{\bar{g}} \\
	=& \int_\Omega \langle h, \gamma^*_{\bar{g}} f \rangle_{\bar g}  dv_{\bar{g}} + \int_\Sigma \left[ - (\partial_{{\nu_{\bar g}}} (tr_{\bar g} h) + \langle \delta_{\bar g} h,  {\nu_{\bar g}} \rangle_{\bar g} ) f + (tr_{\bar g} h) \partial_{{\nu_{\bar g}}}f  - h( {\nu_{\bar g}}, \nabla_{\bar g} f )  \right]d\sigma_{\bar g}.
	\end{align*}
\end{proposition}

\begin{proof}
	It is straightforward that
	\begin{align*}
	& \int_\Omega \left(DR_{\bar g} \cdot h\right) f dv_{\bar{g}} \\
	=& \int_\Omega \left( - \Delta_{\bar g}(tr_{\bar g} h) + \delta_{\bar g}^2 h - Ric_{\bar g} \cdot h \right) f dv_{\bar g}\\
	=& \int_\Omega \langle h, \gamma^*_{\bar{g}} f \rangle_{\bar g}  dv_{\bar{g}} + \int_\Sigma \left[ - (\partial_{{\nu_{\bar g}}} (tr_{\bar g} h) + \langle \delta_{\bar g} h,  {\nu_{\bar g}} \rangle_{\bar g} ) f + (tr_{\bar g} h) \partial_{{\nu_{\bar g}}}f  - h( {\nu_{\bar g}}, \nabla_{\bar g} f )  \right]d\sigma_{\bar g}
	\end{align*}
	from Lemma \ref{lem:scalar_variation_formulae} and integration by parts.
\end{proof}

\vskip 0.2in

\begin{proposition}\label{prop:second_var_integral_scalar_curvature}
	For any $h \in S_2(\Omega)$ and $f\in C^\infty(\Omega)$, 
	\begin{align*}
	&\int_\Omega ( D^2 R_{\bar g} \cdot (h,h) )f dv_{\bar g}\\
	=&  \int_\Omega \left[ - \frac{1}{2} |\nabla_{\bar g} h|_{\bar g}^2 - \frac{1}{2}|d(tr_{\bar g} h)|_{\bar g}^2 + |\delta_{\bar g} h|^2 - 2 \langle \delta_{\bar g} h, d(tr_{\bar g} h) \rangle_{\bar g} + 2(tr_{\bar g} h) (\delta_{\bar g}^2 h)  + \mathscr{R}_{\bar g}(h, h)  \right] f  dv_{\bar g}\\
	&+\int_\Omega \left[ 2(tr_{\bar g} h) \left( \langle h, \gamma_{\bar g}^*f \rangle_{\bar g} - 2 \langle \delta_{\bar g} h, d f \rangle_{\bar g} - \frac{1}{n-1} (tr_{\bar g} h) \left(tr_{\bar g} (\gamma_{\bar g}^* f) \right) \right) - 2 \langle h, \delta_{\bar g} h \otimes df \rangle_{\bar g} - \langle\gamma^*_{\bar{g}} f, h^2\rangle_{\bar g} \right] dv_{\bar g} \\
	&+ \int_\Sigma \left [  \partial_{{\nu_{\bar g}}}|h|_{\bar g}^2 + \langle \delta_{\bar g} (h^2), {\nu_{\bar g}}\rangle_{\bar g} + 2 h( \nu_{\bar g}, \delta_{\bar g} h) + 2h({\nu_{\bar g}}, \nabla_{\bar g} tr_{\bar g} h )  + 2(tr_{\bar g} h) \langle \delta_{\bar g} h, {\nu_{\bar g}} \rangle_{\bar g}  \right] f d\sigma_{\bar g}\\
	&+  \int_\Sigma \left[ h^2({\nu_{\bar g}}, \nabla_{\bar g} f)  - |h|_{\bar g}^2 \partial_{{\nu_{\bar g}}} f  - 2 (tr_{\bar g} h)h( {\nu_{\bar g}}, \nabla_{\bar g} f) \right]  d\sigma_{\bar g},
	\end{align*}
	where $$\mathscr{R}_{\bar g}(h, h)  := \langle Rm_{\bar g} \cdot h, h \rangle_{\bar g}  + 2(Ric_{\bar g}\cdot h)(tr_{\bar g} h) -\frac{2 R_{\bar g}}{n-1}(tr_{\bar g} h)^2 .$$
\end{proposition}

\begin{proof}
	
	By Lemma \ref{lem:scalar_variation_formulae}, we have
	\begin{align*}
	\int_\Omega ( D^2 R_{\bar g} \cdot (h,h) )f dv_{\bar g}
	=& \int_{\Omega} \left[-2 \gamma_{\bar{g}} (h^2) - \Delta_{\bar g} |h|_{\bar g}^2 + 2 \langle h, \nabla^2_{\bar g} (tr_{\bar g} h) \rangle_{\bar g} + \nabla_{\alpha} h_{\beta\gamma} \nabla^{\beta} h^{\alpha\gamma} \right] f dv_{\bar g}\\
	& + \int_\Omega \left[- 2 \langle \delta_{\bar g} h, d(tr_{\bar g} h) \rangle_{\bar g} - \frac{1}{2} |\nabla_{\bar g} h|_{\bar g}^2 - \frac{1}{2}|d(tr_{\bar g} h)|_{\bar g}^2  \right] f dv_{\bar g}.
	\end{align*}
	From integration by parts,
	\begin{align*}
	&-2 \int_\Omega (\gamma_{\bar{g}} (h^2) ) f dv_{\bar{g}}\\ =& -2 \int_\Omega \langle\gamma^*_{\bar{g}} f, h^2\rangle_{\bar g} dv_{\bar{g}} -2  \int_\Sigma \left[ (tr_{\bar g} (h^2)) \partial_{{\nu_{\bar g}}}f - f \partial_{{\nu_{\bar g}}} (tr_{\bar g} (h^2)) -  h^2 ( {\nu_{\bar g}}, \nabla f)  - \langle\delta_{\bar g} (h^2), {\nu_{\bar g}}\rangle_{\bar g} f \right]d\sigma_{\bar g}\\
	=& -2 \int_\Omega \langle\gamma^*_{\bar{g}} f, h^2\rangle_{\bar g} dv_{\bar{g}} + 2  \int_\Sigma \left[  \left( \partial_{{\nu_{\bar g}}} |h|_{\bar g}^2  + \langle\delta_{\bar g} (h^2), {\nu_{\bar g}}\rangle_{\bar g} \right) f + h^2 ( {\nu_{\bar g}}, \nabla f) -|h|_{\bar g}^2 \partial_{{\nu_{\bar g}}}f \right] d\sigma_{\bar g}
	\end{align*}
	and
	\begin{align*}
	- \int_\Omega \left( \Delta_{\bar g} |h|^2 \right) fdv_{\bar{g}} = - \int_\Omega \left( |h|^2 \Delta_{\bar g} f \right) dv_{\bar{g}} - \int_\Sigma \left [ f \partial_{{\nu_{\bar g}}}|h|_{\bar g}^2 - |h|_{\bar g}^2 \partial_{{\nu_{\bar g}}} f \right] d\sigma_{\bar g}.
	\end{align*}
	Also,
	\begin{align*}
	&2 \int_\Omega \langle h, \nabla_{\bar g}^2 (tr_{\bar g} h) \rangle_{\bar g} f dv_{\bar{g}}\\
	=& 2 \int_\Omega \left[\langle \delta_{\bar g} h, d (tr_{\bar g}h) \rangle f - \langle h, d (tr_{\bar g} h) \otimes df \rangle_{\bar g} \right] dv_{\bar g} + 2 \int_\Sigma h({\nu_{\bar g}}, \nabla_{\bar g} (tr_{\bar g} h) ) f d\sigma_{\bar g}\\
	=& 2 \int_\Omega (tr_{\bar g} h) \left[ (\delta_{\bar g}^2 h) f - 2 \langle \delta_{\bar g} h, d f \rangle_{\bar g} + \langle h, \nabla_{\bar g}^2 f \rangle_{\bar g} \right] dv_{\bar g} \\
	&+ 2 \int_\Sigma \left[(h({\nu_{\bar g}}, \nabla_{\bar g} (tr_{\bar g} h) )  + (tr_{\bar g} h) \langle \delta_{\bar g} h, {\nu_{\bar g}} \rangle_{\bar g} ) f - (tr_{\bar g} h)h( {\nu_{\bar g}}, \nabla_{\bar g} f) \right]  d\sigma_{\bar g}\\
	=& 2 \int_\Omega (tr_{\bar g} h) \left[ (\delta_{\bar g}^2 h) f - 2 \langle \delta_{\bar g} h, d f \rangle_{\bar g} + \langle h, \gamma_{\bar g}^*f \rangle_{\bar g} + (tr_{\bar g} h) \Delta_{\bar g} f +  (Ric_{\bar g} \cdot h)  f  \right] dv_{\bar g} \\
	&+ 2 \int_\Sigma \left[(h({\nu_{\bar g}}, \nabla_{\bar g} (tr_{\bar g} h) )  + (tr_{\bar g} h) \langle \delta_{\bar g} h, {\nu_{\bar g}} \rangle_{\bar g} )f - (tr_{\bar g} h)h( {\nu_{\bar g}}, \nabla_{\bar g} f) \right]  d\sigma_{\bar g}
	\end{align*}
	and
	\begin{align*}
	&\int_\Omega \left[ \nabla_{\alpha} h_{\beta\gamma} \nabla^{\beta} h^{\alpha\gamma}\right] f dv_{\bar{g}} \\
	=&  -\int_\Omega h_\gamma^{\ \beta}  \left[ \nabla_{\alpha} \nabla_\beta h^{\alpha\gamma}f + \nabla_\beta  h^{\alpha\gamma}\nabla_\alpha f\right] dv_{\bar{g}} + \int_\Sigma \left[  h_{\beta\gamma}{\nu_{\bar g}}_{\alpha} \nabla^{\beta} h^{\alpha\gamma}\right] f d\sigma_{\bar g}\\
	=&  -\int_\Omega h_\gamma^{\ \beta}  \left[ (\nabla_\beta\nabla_{\alpha}  h^{\alpha\gamma} + R_{\alpha \beta \delta}^{\ \ \ \ \alpha} h^{\delta\gamma} + R_{\alpha \beta \delta}^{\ \ \ \ \gamma} h^{\alpha\delta})f + \nabla_\beta  h^{\alpha\gamma}\nabla_\alpha f\right] dv_{\bar{g}} + \int_\Sigma \left[  h_{\beta\gamma}{\nu_{\bar g}}_{\alpha} \nabla^{\beta} h^{\alpha\gamma}\right] f d\sigma_{\bar g}\\
	=&  -\int_\Omega \left[ - \nabla_\beta h_\gamma^{\ \beta} \nabla_{\alpha}  h^{\alpha\gamma}f - 2h_\gamma^{\ \beta} \nabla_{\alpha}  h^{\alpha\gamma} \nabla_\beta f - h_\gamma^{\ \beta}   h^{\alpha\gamma} \nabla_\beta\nabla_\alpha f + \left(\langle Ric_{\bar g}, h^2\rangle_{\bar g} - \langle Rm_{\bar g}\cdot h, h\rangle_{\bar g} \right)f \right] dv_{\bar{g}} \\
	&+ \int_\Sigma \left[  \left(h_{\beta\gamma}{\nu_{\bar g}}_{\alpha} \nabla^{\beta} h^{\alpha\gamma}  - h_\gamma^\beta {\nu_{\bar g}}_\beta \nabla_{\alpha}  h^{\alpha\gamma} \right)f - h_\gamma^\beta   h^{\alpha\gamma} {\nu_{\bar g}}_\beta\nabla_\alpha f\right]  d\sigma_{\bar g}\\
	=&  \int_\Omega \left[ |\delta_{\bar g} h|_{\bar g}^2 f - 2 \langle h, \delta_{\bar g} h \otimes df \rangle_{\bar g} + \langle \nabla_{\bar g}^2 f- f Ric_{\bar g}, h^2 \rangle_{\bar g} + \langle Rm_{\bar g} \cdot h, h \rangle_{\bar g} f \right] dv_{\bar g} \\
	&- \int_\Sigma \left[ \left( \langle \delta_{\bar g} (h^2), {\nu_{\bar g}}\rangle_{\bar g} -2 h( \nu_{\bar g}, \delta_{\bar g} h) \right)f + h^2({\nu_{\bar g}}, \nabla_{\bar g} f) \right]  d\sigma_{\bar g}\\
	=&  \int_\Omega \left[ |\delta_{\bar g} h|_{\bar g}^2 f - 2 \langle h, \delta_{\bar g} h \otimes df \rangle_{\bar g} + \langle \gamma_{\bar g}^*f + \bar g \Delta_{\bar g} f, h^2 \rangle_{\bar g} + \langle Rm_{\bar g} \cdot h, h \rangle_{\bar g} f \right] dv_{\bar g} \\
	&- \int_\Sigma \left[ \left( \langle \delta_{\bar g} (h^2), {\nu_{\bar g}}\rangle_{\bar g} -2 h( \nu_{\bar g}, \delta_{\bar g} h) \right)f + h^2({\nu_{\bar g}}, \nabla_{\bar g} f) \right]  d\sigma_{\bar g}.
	\end{align*}	
	Combining all calculations above, we obtain
	
	\begin{align*}
	&\int_\Omega ( D^2 R_{\bar g} \cdot (h,h) )f dv_{\bar g}\\
	=&  \int_\Omega \left[ - \frac{1}{2} |\nabla_{\bar g} h|_{\bar g}^2 - \frac{1}{2}|d(tr_{\bar g} h)|_{\bar g}^2 + |\delta_{\bar g} h|_{\bar g}^2 - 2 \langle \delta_{\bar g} h, d(tr_{\bar g} h) \rangle_{\bar g} + \langle Rm_{\bar g} \cdot h, h \rangle_{\bar g} + 2 (tr_{\bar g} h) (Ric_{\bar g} \cdot h)  \right] f  dv_{\bar g}\\
	&+\int_\Omega \left[ 2(tr_{\bar g} h) \left( (\delta_{\bar g}^2 h) f + \langle h, \gamma_{\bar g}^*f \rangle_{\bar g} - 2 \langle \delta_{\bar g} h, d f \rangle_{\bar g} + (tr_{\bar g} h) \Delta_{\bar g} f \right) - 2 \langle h, \delta_{\bar g} h \otimes df \rangle_{\bar g} - \langle\gamma^*_{\bar{g}} f, h^2\rangle_{\bar g}  \right] dv_{\bar g} \\
	&+ \int_\Sigma \left [   \left( \partial_{{\nu_{\bar g}}}|h|_{\bar g}^2 + \langle \delta_{\bar g} (h^2), {\nu_{\bar g}}\rangle_{\bar g} + 2 h( \nu_{\bar g}, \delta_{\bar g} h) \right)f  - |h|_{\bar g}^2 \partial_{{\nu_{\bar g}}} f + h^2({\nu_{\bar g}}, \nabla_{\bar g} f) \right] d\sigma_{\bar g}\\
	&+ 2 \int_\Sigma \left[(h({\nu_{\bar g}}, \nabla_{\bar g} (tr_{\bar g} h) )  + (tr_{\bar g} h) \langle \delta_{\bar g} h, {\nu_{\bar g}} \rangle_{\bar g} )f - (tr_{\bar g} h)h( {\nu_{\bar g}}, \nabla_{\bar g} f) \right]  d\sigma_{\bar g}\\
	=&  \int_\Omega \left[ - \frac{1}{2} |\nabla_{\bar g} h|_{\bar g}^2 - \frac{1}{2}|d(tr_{\bar g} h)|_{\bar g}^2 + |\delta_{\bar g} h|_{\bar g}^2 - 2 \langle \delta_{\bar g} h, d(tr_{\bar g} h) \rangle_{\bar g} + 2(tr_{\bar g} h) (\delta_{\bar g}^2 h)  + \mathscr{R}_{\bar g}(h, h)  \right] f  dv_{\bar g}\\
	&+\int_\Omega \left[ 2(tr_{\bar g} h) \left( \langle h, \gamma_{\bar g}^*f \rangle_{\bar g} - 2 \langle \delta_{\bar g} h, d f \rangle_{\bar g} - \frac{1}{n-1} (tr_{\bar g} h) \left(tr_{\bar g} (\gamma_{\bar g}^* f) \right) \right) - 2 \langle h, \delta_{\bar g} h \otimes df \rangle_{\bar g} - \langle\gamma^*_{\bar{g}} f, h^2\rangle_{\bar g} \right] dv_{\bar g} \\
	&+ \int_\Sigma \left [  \partial_{{\nu_{\bar g}}}|h|_{\bar g}^2 + \langle \delta_{\bar g} (h^2), {\nu_{\bar g}}\rangle_{\bar g} + 2 h( \nu_{\bar g}, \delta_{\bar g} h) + 2h({\nu_{\bar g}}, \nabla_{\bar g} (tr_{\bar g} h) )  + 2(tr_{\bar g} h) \langle \delta_{\bar g} h, {\nu_{\bar g}} \rangle_{\bar g}  \right] f d\sigma_{\bar g}\\
	&+  \int_\Sigma \left[ h^2({\nu_{\bar g}}, \nabla_{\bar g} f)  - |h|_{\bar g}^2 \partial_{{\nu_{\bar g}}} f  - 2 (tr_{\bar g} h)h( {\nu_{\bar g}}, \nabla_{\bar g} f) \right]  d\sigma_{\bar g},
	\end{align*}
	where we used the fact that 
	$$ tr_{\bar g} (\gamma_{\bar g}^* f) = - (n-1) \left( \Delta_{\bar g} f + \frac{R_{\bar g}}{n - 1} f \right) $$
	and
	$$\mathscr{R}_{\bar g}(h, h) = \langle Rm_{\bar g} \cdot h, h \rangle_{\bar g}  + 2(Ric_{\bar g}\cdot h)(tr_{\bar g} h) -\frac{2 R_{\bar g}}{n-1}(tr_{\bar g} h)^2 .$$	
\end{proof} 

\vskip 0.2in

In particular, for $V$-static metrics, we have the following identity:
\begin{corollary}\label{cor:V_static_scalar_curv_second_var} 
	Suppose $(\Omega, \bar g, f, \kappa)$ is a $V$-static space, then for any $h \in \ker \delta_{\bar g}$ with $h|_{T\Sigma} \equiv 0$,
	\begin{align*}
	&\int_\Omega \left( D^2R_{\bar g} \cdot (h, h) \right) f dv_{\bar{g}}  \\
	=& -\frac{1}{2} \int_\Omega \left[\left( |\nabla_{\bar g} h|_{\bar g}^2 + |d(tr_{\bar g} h)|_{\bar g}^2 - 2 \mathscr{R}_{\bar g}(h, h) \right)f +  2\kappa \left(  |h|_{\bar g}^2 + \frac{2}{n-1}(tr_{\bar g} h)^2 \right)\right] dv_{\bar{g}}\\ 
	&- \int_\Sigma \left[  A_{\bar g}^{ij} h_{in}h_{jn} - \left(  h_{nn}^2 - 3 \sum_{i=1}^{n-1} h^2_{in} \right) H_{\bar g} + 4 h_{nn} \left(  \nabla_i h_n^{\ i} - \frac{1}{2}  \nabla_n h_{i}^{\ i} \right)  \right] f  d\sigma_{\bar g}\\
	&- \int_\Sigma \left[   \left(2 h^2_{nn} + \sum_{i=1}^{n-1} h^2_{in}  \right) \partial_n f  + 2 h_{nn} \sum_{i=1}^{n-1} h_{in}\partial_i f  \right] d\sigma_{\bar g}.
	\end{align*}
\end{corollary}

\begin{proof}
	Applying Proposition \ref{prop:second_var_integral_scalar_curvature} with our assumptions, 
	\begin{align*}
	&\int_\Omega ( D^2 R_{\bar g} \cdot (h,h) )f dv_{\bar g}\\
	=&  - \frac{1}{2} \int_\Omega \left[ \left( |\nabla_{\bar g} h|_{\bar g}^2 + |d(tr_{\bar g} h)|_{\bar g}^2 -2 \mathscr{R}_{\bar g}(h, h)  \right) f  + 2 \kappa \left( |h|^2_{\bar g} + \frac{2}{n-1} (tr_{\bar g} h)^2\right)\right] dv_{\bar g} \\
	&+  \int_\Sigma \left[ \left (  \partial_{{\nu_{\bar g}}}|h|_{\bar g}^2 + \langle \delta_{\bar g} (h^2), {\nu_{\bar g}}\rangle_{\bar g} + 2h({\nu_{\bar g}}, \nabla_{\bar g} (tr_{\bar g} h) )  \right) f + h^2({\nu_{\bar g}}, \nabla_{\bar g} f)  - |h|_{\bar g}^2 \partial_{{\nu_{\bar g}}} f  - 2 (tr_{\bar g} h)h( {\nu_{\bar g}}, \nabla_{\bar g} f) \right]  d\sigma_{\bar g}.
	\end{align*}
	
	For the boundary integral, we will rewrite it in terms of the orthonormal frame chosen for the boundary. Note that we have the following identities
	\begin{align}
	\Gamma_{ij}^n = - A_{ij}^{\bar g},\quad \Gamma_{jn}^k = A_j^k, \quad \Gamma_{in}^i = H_{\bar g}
	\end{align}
	holds on $\Sigma$.
	Since $$ \delta_{\bar g} h = 0 $$ and $$h_{ij} = 0, \quad i,j = 1, \cdots, n-1,$$
	we have 
	$$\langle \delta_{\bar g} (h^2), {\nu_{\bar g}} \rangle_{\bar g} = (\delta_{\bar g} (h^2) )_n = - \nabla_\alpha (h_\beta^{\ \alpha}h_n^{\ \beta})= - h_\beta^{\ \alpha}\nabla_\alpha h_n^{\ \beta} =- h_{nn} \nabla_n h_{nn} - h_n^{\ i} \nabla_i h_{nn} - h_n^{\ i}\nabla_n h_{in}$$
	and 
	$$ \partial_{{\nu_{\bar g}}} |h|_{\bar g}^2 = \nabla_n |h|_{\bar g}^2 = 2 h_{nn} \nabla_n h_{nn} + 4 h_n^{\ i} \nabla_n h_{in}$$
	on $\Sigma$. Thus,
	\begin{align*}
	&\partial_{{\nu_{\bar g}}} |h|_{\bar g}^2 + \langle \delta_{\bar g} (h^2), {\nu_{\bar g}} \rangle_{\bar g} + 2 h( {\nu_{\bar g}}, \nabla_{\bar g} (tr_{\bar g} h) ) \\ 
	=&  h_{nn} \nabla_n h_{nn} + 3 h_n^{\ i} \nabla_n h_{in} -  h_n^{\ i} \nabla_i h_{nn} + 2 h_{nn} \nabla_n (tr_{\bar g} h) + 2 h_n^{\ i} \nabla_i (tr_{\bar g} h)\\
	=& 3h_{nn} \nabla_n h_{nn} + 3 h_n^{\ i} \nabla_n h_{in} - h_n^{\ i} \nabla_i h_{nn} + 2 h_{nn} \nabla_n h_i^{\ i} + 2 h_n^{\ i}  \nabla^\Sigma_i h_{nn}\\
	=& - 3h_{nn} \nabla_i h_n^{\ i} - 3 h_n^{\ i}  \nabla_j h_i^{\ j} - h_n^{\ i} \nabla_i h_{nn} + 2 h_{nn} \nabla_n h_i^{\ i} + 2 h_n^{\ i} \nabla^\Sigma_i h_{nn},
	\end{align*}
	where we used the fact
	$$\nabla_n h_{n \alpha} = - (\delta_{\bar g} h)_{\alpha} - \nabla_i h_\alpha^{\ i} = - \nabla_i h_\alpha^{\ i}.$$
	Moreover, from
	\begin{align*}
	\nabla_j h_i^{\ j} = \partial_j h_i^{\ j} + \Gamma_{j \alpha}^j h_i^{\ \alpha} - \Gamma_{ji}^\alpha h_\alpha^{\ j} = A_{ij}^{\bar g} h_n^{\ j} + H_{\bar{g}} h_{in}
	\end{align*}
	and
	\begin{align*}
	\nabla_i h_{nn} = \partial_i h_{nn} - 2 \Gamma_{in}^\alpha h_{\alpha n} = \nabla^\Sigma_i h_{nn} - 2A_{ij}^{\bar g} h_n^{\ j},
	\end{align*}
	we obtain
	\begin{align*}
	&\partial_{{\nu_{\bar g}}} |h|_{\bar g}^2 + \langle \delta_{\bar g} (h^2), {\nu_{\bar g}} \rangle_{\bar g} + 2 h( {\nu_{\bar g}}, \nabla_{\bar g} (tr_{\bar g} h) ) \\
	=& - A_{\bar g}^{ij} h_{in}h_{jn} - 3 H_{\bar g} \sum_{i=1}^{n-1} h^2_{in} + h_n^{\ i} \nabla^\Sigma_i h_{nn} - 3 h_{nn} \nabla_i h_n^{\ i} + 2 h_{nn} \nabla_n h_i^{\ i}.
	\end{align*}
	On the other hand,
	\begin{align*}
	& h^2( {\nu_{\bar g}} , \nabla_{\bar g} f ) - |h|_{\bar g}^2 \partial_{{\nu_{\bar g}}} f  - 2 (tr_{\bar g} h) h( {\nu_{\bar g}}, \nabla_{\bar g} f ) \\
	=& - \left( 2 h^2_{nn} + \sum_{i=1}^{n-1} h^2_{in}  \right) \partial_n f - h_{nn} \sum_{i=1}^{n-1} h_{in}\partial_i f.
	\end{align*}
	Applying integration by parts,
	\begin{align*}
	&\int_\Sigma \left[ \left (  \partial_{{\nu_{\bar g}}}|h|_{\bar g}^2 + \langle \delta_{\bar g} (h^2), {\nu_{\bar g}}\rangle_{\bar g} + 2h({\nu_{\bar g}}, \nabla_{\bar g} (tr_{\bar g} h) )  \right) f + h^2({\nu_{\bar g}}, \nabla_{\bar g} f)  - |h|_{\bar g}^2 \partial_{{\nu_{\bar g}}} f  - 2 (tr_{\bar g} h)h( {\nu_{\bar g}}, \nabla_{\bar g} f) \right]  d\sigma_{\bar g}\\
	=& -\int_\Sigma \left[\left(  A_{\bar g}^{ij} h_{in}h_{jn} + 3 H_{\bar g} \sum_{i=1}^{n-1} h^2_{in} \right)f  + \left( 2 h^2_{nn} + \sum_{i=1}^{n-1} h^2_{in}  \right) \partial_n f + 2 h_{nn} \sum_{i=1}^{n-1} h_{in}\partial_i f  \right] d\sigma_{\bar g}\\ 
	&\ \ + \int_\Sigma \left( - h_{nn} \nabla^\Sigma_i h_n^{\ i} - 3 h_{nn} \nabla_i h_n^{\ i} + 2 h_{nn} \nabla_n h_{i}^{\ i} \right) f  d\sigma_{\bar g} .
	\end{align*}
	Note that
	\begin{align*}
	\nabla_i h_n^{\ i} =\partial_i h_n^{\ i} + \Gamma_{i \alpha}^i h_n^{\ \alpha} - \Gamma_{in}^\alpha h_\alpha^{\ i} = \nabla_i^\Sigma h_n^{\ i} + H_{\bar{g}} h_{nn}
	\end{align*} 
	and hence
	\begin{align*}
	&\int_\Sigma \left[ \left (  \partial_{{\nu_{\bar g}}}|h|_{\bar g}^2 + \langle \delta_{\bar g} (h^2), {\nu_{\bar g}}\rangle_{\bar g} + 2h({\nu_{\bar g}}, \nabla_{\bar g} (tr_{\bar g} h) )  \right) f + h^2({\nu_{\bar g}}, \nabla_{\bar g} f)  - |h|_{\bar g}^2 \partial_{{\nu_{\bar g}}} f  - 2 (tr_{\bar g} h)h( {\nu_{\bar g}}, \nabla_{\bar g} f) \right]  d\sigma_{\bar g}\\
	=& - \int_\Sigma \left[  A_{\bar g}^{ij} h_{in}h_{jn} - \left(  h_{nn}^2 - 3 \sum_{i=1}^{n-1} h^2_{in} \right) H_{\bar g} + 4 h_{nn} \left(  \nabla_i h_n^{\ i} - \frac{1}{2}  \nabla_n h_{i}^{\ i} \right)  \right] f  d\sigma_{\bar g}\\
	&- \int_\Sigma \left[  \left( 2 h^2_{nn} +\sum_{i=1}^{n-1} h^2_{in}  \right) \partial_n f  + 2 h_{nn} \sum_{i=1}^{n-1} h_{in}\partial_i f  \right] d\sigma_{\bar g}.
	\end{align*}
	Therefore, the conclusion follows.
\end{proof}

\vskip 0.2in

In particular, for a special class of $V$-static spaces, we have
\begin{corollary}\label{cor:int_scalar_curv_second_var_divergence_gauge} 
	Suppose $(M^n, \bar g)$ is a closed Einstein manifold with $$Ric_{\bar g} = (n-1) \lambda \bar g,$$ then for any $h \in S_{2,\bar g}^{_{TT}} (M) \oplus (C^\infty (M) \cdot \bar g)$,
	we have
	\begin{align*}
	\int_M ( D^2 R_{\bar g} \cdot (h,h) ) dv_{\bar g} 
	= - \frac{1}{2} \int_M \left( - \langle h, \Delta_E^{\bar g} h \rangle_{\bar g}  + \frac{n^2 - 2}{n^2}|d(tr_{\bar g} h)|_{\bar g}^2 -2 (n-1) \lambda |h|_{\bar g}^2  \right) dv_{\bar g}.
	\end{align*}
\end{corollary}

\begin{proof}
	According to the $V$-static equation (\ref{eqn:V_static}), it is obvious that Einstein manifold $(M^n, \bar g)$ is a $V$-static space with $f\equiv 1$ on $M$ and $\kappa = - (n-1) \lambda$. By Corollary \ref{cor:V_static_scalar_curv_second_var}, we obtain
		\begin{align*}
		&\int_M ( D^2 R_{\bar g} \cdot (h,h) ) dv_{\bar g}\\
		=&  \int_M \left[ - \frac{1}{2} |\nabla_{\bar g} h|_{\bar g}^2 - \frac{1}{2}|d(tr_{\bar g} h)|_{\bar g}^2 + |\delta_{\bar g} h|_{\bar g}^2 + \mathscr{R}_{\bar g}(h, h) +  2 \lambda (tr_{\bar g} h)^2 + (n-1) \lambda |h|_{\bar g}^2   \right] dv_{\bar g}.
		\end{align*}
		From our assumption,
		$$\delta_{\bar g} h = - \frac{1}{n} d(tr_{\bar g} h) $$
		and hence
		\begin{align*}
		&\int_M ( D^2 R_{\bar g} \cdot (h,h) ) dv_{\bar g}\\
		=&  \int_M \left[ - \frac{1}{2} |\nabla_{\bar g} h|_{\bar g}^2 - \frac{n^2 - 2}{2n^2}|d(tr_{\bar g} h)|_{\bar g}^2 + \mathscr{R}_{\bar g}(h, h) + 2 \lambda (tr_{\bar g} h)^2 + (n-1) \lambda |h|_{\bar g}^2  \right] dv_{\bar g}.
		\end{align*}
		Since
		\begin{align*}
			\mathscr{R}_{\bar g}(h, h) =& \langle Rm_{\bar g} \cdot h, h \rangle_{\bar g}  + 2(Ric_{\bar g}\cdot h)(tr_{\bar g} h) -\frac{2 R_{\bar g}}{n-1}(tr_{\bar g} h)^2\\
			=& \langle Rm_{\bar g} \cdot h, h \rangle_{\bar g} -2 \lambda (tr_{\bar g} h)^2,
		\end{align*}
		thus
		\begin{align*}
		\int_M ( D^2 R_{\bar g} \cdot (h,h) ) dv_{\bar g} 
		=& - \frac{1}{2} \int_M \left( - \langle h, \Delta_E^{\bar g} h \rangle_{\bar g}  + \frac{n^2 - 2}{n^2}|d(tr_{\bar g} h)|_{\bar g}^2 -2 (n-1) \lambda |h|_{\bar g}^2  \right) dv_{\bar g}.
		\end{align*}				
\end{proof}

\vskip 0.2in

\section{Volume comparison for $V$-static spaces}

In this section, we will investigate the volume comparison for geodesic balls in generic $V$-static spaces. \\

Let $\Omega$ be an $n$-dimensional compact domain in a $V$-static space $(M^n, \bar g, f, \kappa)$ with $C^1$-boundary $\Sigma:=\partial \Omega$. We define the functional
\begin{align}\label{the functional}
\mathscr{F}_{\Omega, \bar g} [g] := \int_{\Omega}  R(g) f dv_{\bar{g}} + 2 \int_\Sigma H(g) f d\sigma_{\bar g} - 2 \kappa V_{\Omega}(g),
\end{align}
where 
\begin{align*}
	g \in \mathcal{M}_{\Omega, \Sigma, \bar g}:= \{ g \in \mathcal{M}_{\Omega} : g|_{T\Sigma} = \bar g|_{T\Sigma} \}
\end{align*}
is a Riemannian metric on $\Omega$ that induces the same metric with $\bar g$ on the boundary $\Sigma$ . \\

This functional is particularly designed for a given $V$-static space. The information of both volume and curvature is encoded in this single functional. It has excellent variational properties:
\begin{proposition}\label{prop:F_critical_pt}
The $V$-static metric $\bar g$ is a critical point of the functional $\mathscr{F}_{\Omega, \bar g} [g]$. That is, 
\begin{align}
	D \mathscr{F}_{\Omega, \bar g} \cdot h = 0, 
\end{align}
for any $h\in S_2(\Omega)$ with $h|_{T{\partial \Omega}} \equiv 0$.
\end{proposition}

\begin{proof}
Applying Proposition \ref{prop:int_scalar_first variation} and together with Lemma \ref{lem:mean_curv_variations} and \ref{lem:vol_variations},
\begin{align*}
D\mathscr{F}_{\Omega, \bar{g}} \cdot h 
=& \int_\Omega (DR_{\bar{g}} \cdot h) f dv_{\bar{g}} + 2 \int_{\partial \Omega} (D H_{\bar{g}} \cdot h) f d\sigma_{\bar{g}} - 2 \kappa \left(DV_{\Omega, \bar g} \cdot h\right)\\
=& \int_\Omega \left[\langle h, \gamma^*_{\bar{g}}f \rangle_{\bar g} - \kappa (tr_{\bar g} h) \right] dv_{\bar{g}} \\
&+ \int_{\partial \Omega} \left[ -  (\partial_n (tr_{\bar g} h)  +  (\delta_{\bar g} h)_n + 2\nabla_i h_n^{\ i} - \nabla_n h_i^{\ i} -  h_{nn} H_{\bar{g}}) f   - h_n^{\ i}\partial_i f\right]d\sigma_{\bar{g}},
\end{align*}
where we used the fact $tr_{\bar g} h = h_{nn}$ on ${\partial \Omega}$. Since
\begin{align*}
\nabla_i h_n^{\ i} = \partial_i h_n^{\ i}+ \Gamma_{i\alpha}^i h_n^{\ \alpha} - \Gamma_{i n}^\alpha h_\alpha^{\ i} = \nabla_i^\Sigma h_n^{\ i} + H_{\bar g} h_{nn},
\end{align*}
we have
\begin{align*}
(\delta_{\bar g} h )_n = - \nabla_\alpha h_n^{\ \alpha} = - \nabla_i^\Sigma h_n^{\ i} - \nabla_n h_{nn} - H_{\bar{g}} h_{nn}.
\end{align*}
Therefore,
\begin{align*}
D\mathscr{F}_{\Omega, \bar{g}} \cdot h =\int_\Omega \langle h, \gamma^*_{\bar{g}}f - \kappa \bar g \rangle_{\bar g}  dv_{\bar{g}} -  \int_{\partial \Omega} \left[ (\nabla_i^\Sigma h_n^{\ i}) f  + h_n^{\ i}\partial_i f \right]d\sigma_{\bar{g}} = - \int_{\partial \Omega} \nabla_i^\Sigma (h_n^{\ i} f) d\sigma_{\bar{g}} = 0.
\end{align*}
i.e. $\bar{g}$ is a critical point of $\mathscr{F}_{\Omega, \bar g}[g]$.
\end{proof}
\vskip 0.2in

For the second variation, it is straightforward from Lemma \ref{lem:mean_curv_variations} and \ref{lem:vol_variations} together with Corollary \ref{cor:V_static_scalar_curv_second_var}:
\begin{proposition}\label{prop:F_second_var}
	For any $h \in \ker \delta_{\bar g}$ with $h|_{T\Sigma} \equiv 0$, we have
	\begin{align*}
	&D^2\mathscr{F}_{\Omega, \bar g} \cdot ( h, h)  \\
	= & -\frac{1}{2} \int_\Omega \left[ \left( |\nabla_{\bar g} h|_{\bar g}^2 + |d(tr_{\bar g} h)|_{\bar g}^2 - 2 \mathscr{R}_{\bar g} (h , h) \right)f +  \frac{n+3}{n-1}(tr_{\bar g} h)^2 \kappa \right]  dv_{\bar g}  \\
	&- \int_\Sigma \left[\left(  A^{ij}_{\bar g} h_{in}h_{jn} - \frac{1}{2} \left( h_{nn}^2 - 2 \sum_{i=1}^{n-1} h^2_{in}   \right) H_{\bar g} + 2 h_{nn} \left(  \nabla_i h_n^{\ i} - \frac{1}{2}  \nabla_n h_{i}^{\ i} \right)  \right)f  \right] d\sigma_{\bar g}\\
	&- \int_\Sigma \left[ \left( 2 h^2_{nn} + \sum_{i=1}^{n-1} h^2_{in} \right) \partial_n f  + 2 h_{nn} \sum_{i=1}^{n-1} h_{in}\partial_i f  \right] d\sigma_{\bar g}. 
	\end{align*}
\end{proposition}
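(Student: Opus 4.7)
The plan is to prove Proposition \ref{prop:F_second_var} by direct assembly: differentiate the definition of $\mathscr{F}_{\Omega,\bar g}$ twice at $\bar g$ and plug in the three second-variation formulas already established. From the definition \eqref{the functional} we have
$$D^2\mathscr{F}_{\Omega,\bar g}\cdot(h,h) = \int_\Omega \bigl(D^2 R_{\bar g}\cdot(h,h)\bigr) f\, dv_{\bar g} + 2\int_\Sigma \bigl(D^2 H_{\bar g}\cdot(h,h)\bigr) f\, d\sigma_{\bar g} - 2\kappa\, D^2 Vol_{\Omega,\bar g}\cdot(h,h),$$
so the work reduces to substituting Lemma \ref{lem:scalar_curv_second_var} for the first term, Lemma \ref{lem:integral_mean_curv_second_var} for the second, and Lemma \ref{lem:vol_variations} for the third, using the standing assumptions $\delta h = 0$ and $DH_{\bar g}\cdot h = 0$.

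The only genuine work is bookkeeping: combine like terms in the bulk and on the boundary. In the bulk, the $f$-weighted piece $-\tfrac{1}{2}\bigl(|\nabla h|^2 + |d(tr\,h)|^2 - 2\mathscr{R}_{\bar g}(h,h)\bigr)f$ comes entirely from the scalar curvature variation and is carried over unchanged. The $|h|^2$ coefficient coming from $-\tfrac{1}{2}\cdot 2\kappa = -\kappa$ in Lemma \ref{lem:scalar_curv_second_var} exactly cancels with $-2\kappa\cdot(-\tfrac{1}{2}) = +\kappa$ coming from $D^2 Vol$; this cancellation is the reason no $|h|^2$ term appears in the conclusion. The $(tr\,h)^2$ coefficient is $-\tfrac{2\kappa}{n-1}$ from Lemma \ref{lem:scalar_curv_second_var} and $-\tfrac{\kappa}{2}$ from $D^2 Vol$, which sum to $-\tfrac{(n+3)\kappa}{2(n-1)}$, matching the stated $-\tfrac{1}{2}\cdot\tfrac{n+3}{n-1}\kappa$.

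On the boundary, the $\partial_n f$ and $\partial_i f$ contributions occur only in Lemma \ref{lem:scalar_curv_second_var}, so they appear unchanged in the final formula. The mean curvature weight $H_{\bar g} f$ is the only one that needs attention: Lemma \ref{lem:scalar_curv_second_var} contributes $-(h_{nn}^2 + 3\sum_{i=1}^{n-1} h_{in}^2) H_{\bar g} f$, while doubling the formula of Lemma \ref{lem:integral_mean_curv_second_var} adds $+(\tfrac{1}{2}h_{nn}^2 + 2\sum_{i=1}^{n-1} h_{in}^2)H_{\bar g} f$, and the two combine to the desired $-(\tfrac{1}{2}h_{nn}^2 + \sum_{i=1}^{n-1} h_{in}^2)H_{\bar g} f$. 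The $A^{ij}h_{in}h_{jn}$ term comes only from the scalar-curvature side and carries through.

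There is no real conceptual obstacle at this stage, since the two difficult computations, namely the second variation of scalar curvature integrated against $f$ (Lemma \ref{lem:scalar_curv_second_var}) and the conversion of boundary terms using $\delta h = 0$ and the first-order identity \eqref{eqn:mean_first_order}, have already been carried out. The main point to watch is the sign and weight bookkeeping for the two cancellations above (the complete vanishing of $\kappa|h|^2$ in the bulk and the coefficient merge $3-2=1$ and $1-\tfrac{1}{2}=\tfrac{1}{2}$ for $H_{\bar g}f$ on the boundary); once these are verified, the formula follows immediately.
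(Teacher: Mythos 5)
Your proposal is correct and is exactly the paper's argument: the paper obtains Proposition \ref{prop:F_second_var} by the same direct combination of Lemmas \ref{lem:vol_variations}, \ref{lem:scalar_curv_second_var} and \ref{lem:integral_mean_curv_second_var}, and your bookkeeping (the cancellation of the $\kappa|h|^2$ bulk term, the coefficient $\tfrac{2}{n-1}+\tfrac{1}{2}=\tfrac{n+3}{2(n-1)}$ for $(tr\,h)^2$, and the boundary merges $1-\tfrac12=\tfrac12$ and $3-2=1$) all checks out.
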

\vskip 0.2in

	In general, geometric functionals are invariant under actions of diffeomorphisms and it would cause degenerations on their second variations. In order to get rid of these degenerations, we need to find a metric modulo diffeomorphisms. This is usually referred to be \emph{gauge fixing} and it can be obtained by applying basic elliptic theory and implicit function theorem. For manifold with boundary, this can be achieved if one poses appropriate boundary conditions.
	\begin{lemma}[{\cite[Proposition 11]{B-M}}] \label{lem:slice} 
		Suppose $(\Omega^n, \bar g)$ is a compact Riemannian manifold with boundary. Fix a real number $p > n$, there exists a constant $\varepsilon_1 > 0$, such that for a metric $g$ on $\Omega$ with $$g|_{T\partial \Omega} = \bar g|_{T\partial \Omega}$$ and $$||g - \bar{g}||_{W^{2,p}({\Omega}, \bar{g})} < \varepsilon_1,$$ there exists a diffeomorphism $\varphi: {\Omega} \rightarrow {\Omega}$ such that $\varphi|_{_{\partial\Omega}} = \mathrm{id}$ and $h := \varphi^*g - \bar{g} \in \ker \delta_{\bar g}$. Moreover, $$||h||_{W^{2,p}({\Omega}, \bar{g})} \leq N ||g - \bar{g}||_{W^{2,p}({\Omega}, \bar{g})},$$ for some constant $N > 0$ that only depends on $(\Omega, \bar g)$.
	\end{lemma}
\vskip 0.2in

In particular, we take $\Omega$ to be a geodesic ball $B_r(p)$ at an interior point $p \in M$ with radius $r>0$. Then we have
\begin{proposition}\label{prop:V_static_fix_gauge}
	Suppose $(M^n, \bar g, \kappa, f)$ is a $V$-static space and $p \in M$ is an interior point. Then there is a constant $\varepsilon_1 > 0$ such that for any metric $g$ on $B_r(p)$ satisfies
	\begin{itemize}
		\item $R_g \geq R_{\bar g}$ \ \ in $B_r(p)$,
		\item $H_g \geq H_{\bar g}$ \ on $\partial B_r(p)$,
		\item $g|_{T\partial B_r(p)} = \bar g|_{T\partial B_r(p)}$,
		\item $||g - \bar{g}||_{C^2(B_r(p), \bar{g})} < \varepsilon_1$,
	\end{itemize}
	we can find a diffeomorphism $\varphi \in \mathscr D(B_r(p))$ such that $\varphi|_{\partial B_r(p)} = \mathrm{id}$ and
	\begin{align*}
		h:= \varphi^*g - \bar g \in \ker \delta_{\bar g} 
	\end{align*}
	satisfying $|h|_{\bar g} < \frac{1}{2}$ in $B_r(p)$, $h|_{T\partial B_r(p)} \equiv 0$ on $\partial B_r(p)$ and
	\begin{align*}
	||h||_{C^2(B_r(p), \bar{g})} \leq N ||g - \bar{g}||_{C^2(B_r(p), \bar{g})}		
	\end{align*}
	for some constant $N > 0$ depends only on $(B_r(p), \bar g)$. Additionally, we have
	\begin{itemize}
		\item $R_{\varphi^*g}  \geq R_{\bar g}$ \ \ in $B_r(p)$,
		\item $H_{\varphi^*g} \geq H_{\bar g}$ \ on $\partial B_r(p)$.
	\end{itemize}	
\end{proposition}

\begin{proof}
	The existence of constant $\varepsilon_1$ and diffeomorphism $\varphi$ is a straightforward application of Lemma \ref{lem:slice}. Furthermore, we have 
	\begin{itemize}
		\item $R_{\varphi^*g} = R_g \circ \varphi \geq R_{\bar g}$ \quad \quad \quad \quad \ in $B_r(p)$,
		\item $H_{\varphi^*g} = H_g \circ \varphi = H_g \geq H_{\bar g}$ \quad \ on $\partial B_r(p)$,
	\end{itemize}
	according to facts that the scalar curvature $R_{\bar g}$ is a constant on $M$ (see Remark \ref{rmk:scalar_curv_V_static}) and $\varphi|_{_{\partial B_r(p)}} = \mathrm{id}$.
\end{proof}
\vskip 0.2in

Let ${\hat g}_{_h} = \bar g + h$ be a metric on $B_r(p)$, where $h \in S_2(B_r(p))$ satisfies that $|h|_{\bar g} < \frac{1}{2}$ and $h|_{T\partial B_r(p)} \equiv 0$. From Proposition \ref{prop:F_critical_pt} and \ref{prop:F_second_var}, the remainder of expansion for $\mathscr{F}_{\Omega, \bar g}$ up to second order can be written as
\begin{align}\label{eqn:F_remainder}
r_{_{B_r(p),\bar g}} [h] := &\mathscr{F}_{B_r(p), \bar g} [{\hat g}_{_h}] - \mathscr{F}_{B_r(p), \bar g} [\bar g] - D\mathscr{F}_{B_r(p), \bar g} \cdot h - \frac{1}{2} D^2 \mathscr{F}_{B_r(p), \bar g} \cdot (h, h)\notag \\
=& \int_{B_r(p)} \left( R_{{\hat g}_{_h}} - R_{\bar g}\right) f dv_{\bar g} - 2 \kappa \left( V_{B_r(p)} ({\hat g}_{_h}) - V_{B_r(p)} (\bar g)\right) + I_{B_r(p)}[h] + I_{\partial B_r(p)}[h],
\end{align}
where
\begin{align*}
I_{B_r(p)}[h]:= \frac{1}{4} \int_{B_r(p)} \left[\left( |\nabla_{\bar g} h|_{\bar g}^2 + |d(tr_{\bar g} h)|^2 - 2 \mathscr{R}_{\bar g} (h , h) \right)f + \frac{n+3}{n-1}(tr_{\bar g} h)^2 \kappa\right] dv_{\bar g}
\end{align*}
and
\begin{align*}
&I_{\partial B_r(p)}[h]\\
:=&  \int_{\partial B_r(p)} \left[  2 \left(H_{{\hat g}_{_h}} - H_{\bar g} \right) + \frac{1}{2} A_{\bar g}^{ij} h_{in}h_{jn} - \frac{1}{4} \left( h_{nn}^2 - 2 \sum_{i=1}^{n-1} h^2_{in} \right) H_{\bar g} +  h_{nn} \left(  \nabla_i h_n^{\ i} - \frac{1}{2}  \nabla_n h_{i}^{\ i} \right)  \right] f d\sigma_{\bar g}\\
&+\int_{\partial B_r(p)} \left[ \left( h^2_{nn} + \frac{1}{2} \sum_{i=1}^{n-1} h^2_{in}   \right) \partial_n f  + h_{nn} \sum_{i=1}^{n-1} h_{in}\partial_i f \right] d\sigma_{\bar g}.
\end{align*}
The estimate for the reminder $r_{B_r(p), \bar g}[h]$ plays a key role in our proof. It mainly relies on estimates for lower bounds of integrals $I_{B_r(p)}$ and $I_{\partial B_r(p)}$.\\

The estimate for a lower bound of interior integral $I_{B_r(p)}$ is essentially due to the solution of the following variational problem:
\begin{align*} \label{L-laplace}
\mu (\Omega, \bar g) = \inf \left\{ \frac{\int_{\Omega} |\nabla_{\bar g} h|_{\bar g}^2 dv_{\bar{g}}}{\int_{\Omega} |h|_{\bar g}^2 dv_{\bar{g}}} \ :\ 
h\in S_2(\Omega),\ h \not\equiv 0 \text{ and } h|_{T\partial\Omega} \equiv 0\right\}.
\end{align*}
A basic estimate was obtained by Qing and the author in \cite[Lemma 3.7]{Q-Y_2}:
\begin{lemma}\label{lem:2_tensor_eigenvalue}
	Suppose $(M^n, \ \bar g)$ is a Riemannian manifold with dimension $n \geq 3$ and $B_r(p) $ is a geodesic ball of radius $r$ centered at any interior $p \in M$. Then, there are positive constants $\bar r$ and $c_0$ 
	such that
	\begin{equation}\label{lambda-ball}
	\mu (B_r(p), \bar g) \geq \frac {c_0}{r^2}
	\end{equation}
	for all $0 < r< \bar r$.
\end{lemma}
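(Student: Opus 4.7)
The strategy is a rescaling-plus-compactness argument that reduces everything to a Poincar\'e-type inequality on the Euclidean unit ball.

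\textbf{Step 1: Rescale to the unit ball.} I would work in geodesic normal coordinates $(x^\alpha)$ centered at $p$, in which $\bar g_{\alpha\beta}(x) = \delta_{\alpha\beta} + O(|x|^2)$ and $\partial B_r(p)$ is identified with the Euclidean sphere $\{|x|=r\}$, whose tangent planes agree, to leading order, with the $\bar g$-tangent planes by the Gauss lemma. The dilation $\tilde x = x/r$ with $\tilde h(\tilde x) := h(r\tilde x)$ sends $B_r(p)$ to $B_1(0)$, and the pulled-back metric $\tilde g_{\alpha\beta}(\tilde x) = \bar g_{\alpha\beta}(r\tilde x) = \delta_{\alpha\beta} + O(r^2)$ converges in $C^k$ to the flat Euclidean metric $g_E$ as $r \to 0$. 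A straightforward change of variables gives
\[
\frac{\int_{B_r(p)} |\nabla h|_{\bar g}^2\, dv_{\bar g}}{\int_{B_r(p)} |h|_{\bar g}^2\, dv_{\bar g}} \;=\; \frac{1}{r^2}\cdot\frac{\int_{B_1(0)} |\tilde\nabla \tilde h|_{\tilde g}^2\, dv_{\tilde g}}{\int_{B_1(0)} |\tilde h|_{\tilde g}^2\, dv_{\tilde g}},
\]
and the boundary condition pulls back to $\tilde h|_{T\partial B_1(0)}=0$ in the metric $\tilde g$. It thus suffices to bound the right-hand Rayleigh quotient below by some $c_0>0$, uniformly for $\tilde g$ sufficiently close to $g_E$.

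\textbf{Step 2: Flat Poincar\'e-type inequality.} I would show that there exists $c_0>0$ such that every symmetric 2-tensor $h \in H^1(B_1(0))$ with tangential-tangential part vanishing on $\partial B_1$ satisfies $\int_{B_1} |\nabla h|^2_{g_E} \ge c_0 \int_{B_1} |h|^2_{g_E}$. The argument is by contradiction: from a sequence $h_k$ with $\|h_k\|_{L^2}=1$ and $\|\nabla h_k\|_{L^2}\to 0$, the uniform $H^1$-bound plus Rellich--Kondrachov produce a subsequence converging strongly in $L^2$ and weakly in $H^1$ to some $h_\infty$ with $\nabla h_\infty \equiv 0$ and $\|h_\infty\|_{L^2}=1$. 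Hence $h_\infty$ is a constant symmetric tensor on $B_1$, and continuity of the trace on the weakly convergent sequence forces $h_\infty(X,Y)=0$ for all $X,Y \perp v$ at every $v \in S^{n-1}$. The following algebraic fact then kills $h_\infty$: for $n\ge 3$, if a constant symmetric bilinear form $T$ on $\mathbb{R}^n$ satisfies $T(X,Y)=0$ whenever $X,Y \perp v$ for some unit $v$, then $T=0$. Indeed, for any $X$ choose $v \perp X$ to get $T(X,X)=0$; for any independent $X,Y$ choose $v \perp \mathrm{span}(X,Y)$, possible since $n\ge 3$, to get $T(X,Y)=0$; polarization finishes it. This contradicts $\|h_\infty\|_{L^2}=1$.

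\textbf{Step 3: Stability and conclusion.} Since $\tilde g \to g_E$ in $C^\infty$ as $r \to 0$, the integrands, volume forms, and the tangent spaces to $\partial B_1(0)$ all depend continuously on $\tilde g$; repeating the compactness argument with $\tilde g$ in place of $g_E$ (or alternatively using direct $C^0$-perturbation estimates comparing the two Rayleigh quotients), the flat bound transfers with, say, constant $c_0/2$ for all $r < r_0$ sufficiently small. Undoing the rescaling yields $\mu(B_r(p),\bar g) \ge (c_0/2)/r^2$. The key obstacle is the \emph{partial} Dirichlet nature of the boundary condition, which defeats the standard Poincar\'e--Friedrichs inequality; the algebraic lemma above is what makes the tangential-only vanishing strong enough to force the limit to be trivial, and is exactly where the hypothesis $n \ge 3$ enters the argument.
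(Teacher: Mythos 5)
Your proof is correct; note that the paper itself does not prove this lemma but simply cites \cite[Lemma 3.7]{Q-Y}, and the rescaling-to-the-unit-ball plus Rellich compactness argument you give, with the limit forced to be a parallel (hence constant) symmetric $2$-tensor killed by the vanishing of its tangential part on the whole sphere, is essentially the argument of that cited source. One small quibble: your algebraic step already follows for $n\ge 2$ from $T(X,X)=0$ (choose a unit $v\perp X$) together with polarization, so the hypothesis $n\ge 3$ is not actually where the dimension restriction enters.
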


From this, we are readily to obtain an estimate for a lower bound of $I_{B_r(p)}$:
\begin{proposition}\label{prop:interior_est}
	Suppose $p \in M$ is an interior point with $f(p)>0$, then there is a constant $r_1 > 0$ such that
	\begin{align*}
	f(x) > 0
	\end{align*}
	for all $x \in \overline{B_{r_1}(p)} \subseteq M$. Furthermore, for all $r \in (0,r_1)$ and any $h\in S_2(B_r(p))$ with $h|_{T\partial B_r(p)} \equiv 0$, 
	\begin{align}
	I_{B_r(p)} [h] \geq \frac{1}{8} \left(\inf_{B_{r}(p)} f \right)  ||h||^2_{W^{1,2}(B_r(p), \bar g)}.
	\end{align}
\end{proposition}

\begin{proof}
	By continuity, we can choose a constant $r_1' > 0$ such that $f(x) > 0$ for all $x \in \overline{B_{r_1'}(p)}$. 
	
	It is straightforward that
	\begin{align*}
	|\mathscr{R}_{\bar g}(h,h) |= \left|\langle Rm_{\bar g}\cdot h, h\rangle_{\bar g} + 2 (Ric_{\bar g} \cdot h)(tr_{\bar g} h ) - \frac{2 R_{\bar g}}{n-1} (tr_{\bar g} h)^2 \right|\leq \Lambda_{r_1'} |h|_{\bar g}^2
	\end{align*}
	on $B_{r_1'}(p)$, where $\Lambda_{r_1'} = \Lambda (n, \bar g, ||Rm_{\bar g}||_{C^0(B_{r_1'}(p),\bar g)}) $ is a positive constant independent of $h$. Thus for any $r < r_1'$ and $h \in S_2(B_r(p))$ with $h|_{T\partial B_r(p)} \equiv 0$, we have
	\begin{align*}
	I_{B_r(p)} [h]
	\geq&  \frac{1}{4} \int_{B_r(p)} \left[\left( |\nabla_{\bar g} h|_{\bar g}^2 - 2 |\mathscr{R}_{\bar g} (h , h) |\right)f - 3n |\kappa|  |h|_{\bar g}^2 \right]  dv_{\bar g}\\
	\geq& \frac{1}{4} \int_{B_r(p)} \left[ \left(\inf_{B_{r}(p)} f \right) |\nabla_{\bar g} h|_{\bar g}^2  - \left(2 \Lambda_{r_1'}  \left(\sup_{B_{r}(p)} f \right) + 3n|\kappa| \right) |h|_{\bar g}^2  \right] dv_{\bar g}\\
	=&  \frac{1}{8} \left(\inf_{B_{r}(p)} f \right) ||h||^2_{W^{1,2}(B_r(p), \bar g)} + \frac{1}{8} \left(\inf_{B_{r}(p)} f \right)  \int_{B_r(p)}  \left[|\nabla_{\bar g} h|_{\bar g}^2  - \mu_r |h|_{\bar g}^2 \right]  dv_{\bar g},
	\end{align*}
	where
	\begin{align*}
		\mu_r :=& \frac{4 \Lambda_{r_1'}  \left(\sup_{B_{r}(p)} f \right) + \left(\inf_{B_{r}(p)} f \right) + 6n|\kappa| }{\inf_{B_{r}(p)} f }
		\leq \frac{ (4 \Lambda_{r_1'} + 1) \left(\sup_{B_{r_1'}(p)} f \right) + 6n|\kappa| }{\inf_{B_{r_1'}(p)} f }
		:= \bar \mu_{r_1'}.
	\end{align*}
	Applying Lemma \ref{lem:2_tensor_eigenvalue}, we can choose a positive constant $r_1 < r_1'$ sufficiently small such that   
	$$\int_{B_r(p)}  |\nabla_{\bar g} h|_{\bar g}^2  dv_{\bar g} \geq  \bar \mu_{r_1'} \int_{B_r(p)}  |h|_{\bar g}^2   dv_{\bar g}$$
	for all $r \in (0, r_1)$. Therefore,
	\begin{align*}
	I_{B_r(p)} [h]\geq \frac{1}{8} \left(\inf_{B_{r}(p)} f \right)  ||h||^2_{W^{1,2}(B_r(p), \bar g)}
	\end{align*}
	holds for any $r \in (0, r_1)$.
\end{proof}
\vskip 0.2in

For a lower bound estimate for the boundary integral $I_{\partial B_r(p)}$, we have
\begin{proposition}\label{prop:boundary_est}
Suppose $p \in M$ is an interior point with $f(p)>0$, then there is a constant $r_2 > 0$ such that
\begin{align*}
f(x) > 0
\end{align*}
for all $x \in \overline{B_{r_2}(p)} \subseteq M$. Furthermore, for all $r \in (0, r_2)$ and any metric ${\hat g}_{_h} := \bar g + h$ in $B_r(p)$ satisfies that 
\begin{itemize}
	\item $h \in S_2(B_r(p))$ with $|h|_{\bar g} < \frac{1}{2}$ and $h|_{T\partial B_r(p)} \equiv 0$,
	\item $H_{{\hat g}_{_h}} \geq H_{\bar g}$ \ on $\partial B_r(p)$,
\end{itemize}
then we have
\begin{align}
I_{\partial B_r(p)} [h]\geq  - C_0 \left(\sup_{B_r(p)} f\right) ||h||_{C^1(B_r(p), \bar g)} ||h||^2_{W^{1,2}( B_r(p), \bar g)},
\end{align}
where $C_0 > 0$ is a constant depends only on $(B_r(p), \bar g)$.
\end{proposition}

\begin{proof}
	By continuity, we can choose a constant $r_2' > 0$ such that $f(x) > 0$ for all $x \in \overline{B_{r_2'}(p)}$. 
	
	As observed in \cite{B-M}, for all $r \in (0, r_2')$ and any metric ${\hat g}_{_h} = \bar g + h$ satisfies that $h \in S_2(B_r(p))$ with $|h|_{\bar g} < \frac{1}{2}$ and $h|_{T\partial B_r(p)} \equiv 0$, we have
\begin{align*}
	h_{nn} (H_{{\hat g}_{_h}} - H_{\bar g}) 
	= \frac{1}{2} h_{nn}^2 H_{\bar g} -  h_{nn} \left(  \nabla_i h_n^{\ i} - \frac{1}{2}  \nabla_n h_{i}^{\ i} \right) + F_{\bar g} (h)
\end{align*}
due to Lemma \ref{lem:mean_curv_variations}, where the tail term $F_{\bar g} (h)$ satisfies that
\begin{align*}
	|F_{\bar g} (h)|_{\bar g} \leq \widetilde{C}_1 |h|_{\bar g}^2 (|\nabla_{\bar g} h|_{\bar g} + |A_{\bar g}|_{\bar g}|h|_{\bar g})
\end{align*}
and $\widetilde{C}_1 >0$ is a constant depends only on the dimension $n$. From this,
\begin{align*}
I_{\partial B_r(p)} [h] =& \int_{\partial B_r(p)} \left[ \left(2 - h_{nn}\right) \left(H_{\hat g} - H_{\bar g} \right) + \frac{1}{2}  A_{\bar g}^{ij} h_{in}h_{jn} + \frac{1}{4} \left(  h_{nn}^2 + 2 \sum_{i=1}^{n-1} h^2_{in}  \right) H_{\bar g} \right] f d\sigma_{\bar g}\\
&+ \int_{\partial B_r(p)} \left[ \left(  h^2_{nn} + \frac{1}{2} \sum_{i=1}^{n-1} h^2_{in} \right) \partial_n f  + h_{nn} \sum_{i=1}^{n-1} h_{in}\partial_i f  \right] d\sigma_{\bar g} + \widetilde F_{\bar g}(h), 
\end{align*}
where the tail term $\widetilde F_{\bar g}(h)$ satisfies that
\begin{align*}
	|\widetilde F_{\bar g}(h)| \leq& \widetilde{C}_2 \left(\sup_{B_r(p)} f\right) \int_{\partial B_r(p)} |h|_{\bar g}^2 \left(|\nabla_{\bar g} h|_{\bar g} + |A_{\bar g}|_{\bar g} |h|_{\bar g} \right) dv_{\bar g}
\end{align*}
for a constant $\widetilde{C}_2 > 0$ depends only on the dimension $n$.

For $r> 0$ sufficiently small, it is well-known that second fundamental form and mean curvature of the geodesic sphere $\partial B_r(p)$ behave similarly to round spheres in Euclidean space (see Exercise 1.123 in \cite{C-L-N}): $$A_{ij}^{\bar g} = \frac{1}{r}\bar g_{ij} + O(r)$$ and $$H_{\bar g} = \frac{n-1}{r} + O(r)$$
on $\partial B_r(p)$. Thus we can choose $r_2'' \in (0, r_2')$ such that
$$A_{ij}^{\bar g} \geq \frac{1}{2r}\bar g_{ij}$$ and $$H_{\bar g} \geq \frac{n-1}{2r}$$
holds for any geodesic sphere $\partial B_r(p)$ with $r < r_2''$.

For $r \in (0, r_2'')$, we have
\begin{align*}
&I_{\partial B_r(p)}[h]\\
\geq& \frac{1}{2} \int_{\partial B_r(p)} \left[ \frac{1}{4r} \left(   (n-1) h_{nn}^2 + 2n \sum_{i=1}^{n-1} h^2_{in}  \right)f  - \left(3 h_{nn}^2 + n  \sum_{i=1}^{n-1} h_{in}^2 \right) |\nabla_{\bar g} f|_{\bar g}\right] d\sigma_{\bar g}  + \widetilde F_{\bar g}(h) \\
=& \frac{1}{2} \int_{\partial B_r(p)} \left[ 3 \left(\frac{n-1}{12r} - \frac{|\nabla_{\bar g} f|_{\bar g}}{f}\right) h^2_{nn} + n\left( \frac{1}{2r}  - \frac{|\nabla_{\bar g} f|_{\bar g}}{f}\right)  \sum_{i=1}^{n-1} h_{in}^2 \right] f d\sigma_{\bar g}  + \widetilde F_{\bar g}(h).
\end{align*}
Since $f$ is positively lower bounded and $|\nabla_{\bar g} f|_{\bar g}$ is upper bonded on $B_{r_2''}(p)$, we can pick a constant $r_2 \in (0, r_2'')$ such that 
$$\frac{|\nabla_{\bar g} f|_{\bar g}}{f} \leq \min \left \{ \frac{n-1}{12r}, \frac{1}{2r} \right\}$$
holds in $B_r(p)$ for any $r \in (0, r_2)$ and hence
$$I_{\partial B_r(p)} \geq  \widetilde F_{\bar g}(h) \geq - \widetilde{C}_3 \left(\sup_{B_r(p)} f\right) ||h||_{C^1(\partial B_r(p), \bar g)}  ||h||^2_{L^2(\partial B_r(p), \bar g)}$$ holds for any $r \in (0, r_2)$, where $\widetilde{C}_3 > 0$ is a constant depends only on $n$ and $r$.

Recall the \emph{trace Sobolev's inequality}
\begin{align*}
 ||h||^2_{L^2(\partial B_r(p), \bar g)} \leq \theta_0 \ ||h||^2_{W^{1,2} (B_r(p), \bar g)},
\end{align*}
where $\theta_0 > 0$ is a constant depends only on $(B_r(p), \bar g)$. Therefore, we have the estimate
$$I_{\partial B_r(p)} \geq - C_0 \left(\sup_{B_r(p)} f\right) ||h||_{C^1(B_r(p), \bar g)} ||h||^2_{W^{1,2}( B_r(p), \bar g)}$$ holds for any $r \in (0, r_2)$, where $C_0:= \theta_0 \widetilde{C}_3  > 0$ is a constant depends only on $(B_r(p), \bar g)$. 
\end{proof}
\vskip 0.2in

Now we are readily to prove the main theorem in this section. 
\begin{proof}[Proof of Theorem A]
	Let $$r_0 := \min\{r_1, r_2\} > 0,$$ where $r_1$ and $r_2$ are given by Proposition \ref{prop:interior_est} and \ref{prop:boundary_est}. 
	
	For all $r \in (0, r_0)$, applying Proposition \ref{prop:V_static_fix_gauge}, we can find a constant $\varepsilon_1 > 0$ such that for any metric $g$ on $B_r(p) \subset M$ satisfies
	\begin{itemize}
		\item $R_g \geq R_{\bar g}$ in $B_r(p)$,
		\item $H_g \geq H_{\bar g}$ on $\partial B_r(p)$,
		\item $g|_{T\partial B_r(p)} = \bar g|_{T\partial B_r(p)}$,
		\item $||g - \bar g||_{C^2(B_r(p), \bar g)} < \varepsilon_1$,
	\end{itemize} 
	there is a diffeomorphism $\varphi \in \mathscr{D} (B_r(p))$ such that $\varphi|_{\partial B_r(p)} = \mathrm{id}$ and
	\begin{align*}
	h:= \varphi^*g - \bar g \in \ker \delta_{\bar g} 
	\end{align*}
	satisfying $|h|_{\bar g} < \frac{1}{2}$ in $B_r(p)$, $h|_{T\partial B_r(p)} \equiv 0$ on $\partial B_r(p)$ and
	\begin{align*}
	||h||_{C^2(B_r(p), \bar{g})} \leq N ||g - \bar{g}||_{C^2(B_r(p), \bar{g})}		
	\end{align*}
	for some constant $N > 0$ depends only on $(B_r(p), \bar g)$. Additionally, we have
	\begin{itemize}
		\item $R_{\varphi^*g}  \geq R_{\bar g}$ \ \ in $B_r(p)$,
		\item $H_{\varphi^*g} \geq H_{\bar g}$ \ on $\partial B_r(p)$.
	\end{itemize}
	
	Fix an $r\in (0,r_0)$ and we assume the contrary of the claimed volume comparison:
		\begin{align}\label{ineq:contrary_V-static_volume_comparison}
			\kappa(V_{B_r(p)}(g) - V_{B_r(p)} (\bar g)) \leq 0,
		\end{align}
	which implies
		\begin{align*}
			\kappa(V_{B_r(p)}(\varphi^* g) - V_{B_r(p)} (\bar g)) \leq 0.
		\end{align*}
	By Proposition \ref{prop:interior_est} and \ref{prop:boundary_est}, we have the following lower bound estimate for the remainder:
	\begin{align*}
	r_{_{B_r(p),\bar g}} [h] 
	= &\mathscr{F}_{B_r(p), \bar g} [\varphi^*g] - \mathscr{F}_{B_r(p), \bar g} [\bar g] - D\mathscr{F}_{B_r(p), \bar g} \cdot h - \frac{1}{2} D^2 \mathscr{F}_{B_r(p), \bar g} \cdot (h, h)\notag \\
	=& \int_{B_r(p)} \left( R_{\varphi^*g} - R_{\bar g}\right) f dv_{\bar g} - 2 \kappa \left( V_{B_r(p)} (\varphi^*g) - V_{B_r(p)} (\bar g)\right) + I_{B_r(p)}[h] + I_{\partial B_r(p)}[h]\\
	\geq& \left( \frac{1}{8} \left(\inf_{B_{r}(p)} f \right)  - C_0 \left(\sup_{B_r(p)} f\right) ||h||_{C^1(B_r(p), \bar g)} \right) ||h||^2_{W^{1,2}( B_r(p), \bar g)} .
	\end{align*}
	On the other hand, if we denote
	\begin{align*}
		\tau_r := \max \left\{ \sup_{B_r(p)} f,\ \sup_{B_r(p)} |\nabla_{\bar g} f|_{\bar g}\right\},
	\end{align*}
	then the upper bound of remainder can be estimated using Taylor's formula:
	\begin{align*}
	r_{_{B_r(p), \bar g}} [h] =& \frac{1}{6} D^3 \mathscr{F}_{B_r(p), \bar g + \xi h} \cdot (h, h, h)\\
	\leq& C_1 \tau_r \int_{B_r(p)} |h|_{\bar g}\left(|\nabla_{\bar g} h|_{\bar g}^2 + |h|_{\bar g}^2 \right)dv_{\bar g} +  C_2 \tau_r \int_{\partial B_r(p)} |h|_{\bar g}^2\left(|\nabla_{\bar g} h|_{\bar g} + |A_{\bar g}|_{\bar g}|h|_{\bar g} \right)dv_{\bar g}\\
	\leq& C_1 \tau_r ||h||_{C^0(B_r(p), \bar g)} ||h||^2_{W^{1,2}(B_r(p), \bar g)} + C_3 \tau_r ||h||_{C^1(B_r(p), \bar g)} ||h||^2_{L^2(\partial B_r(p), \bar g)}, 
	\end{align*}
	where $\xi \in (0,1)$ is a constant and $C_1, C_2, C_3$ are positive constants depends only on $(B_r(p), \bar g)$. Recall again the \emph{trace Sobolev inequality}
	\begin{align*}
	||h||^2_{L^2(\partial B_r(p), \bar g)} \leq \theta_0 \ ||h||^2_{W^{1,2} (B_r(p), \bar g)},
	\end{align*}
	where $\theta_0 > 0$ is constant depends only on $(B_r(p), \bar g)$. From this, we obtain
	\begin{align*}
	r_{_{B_r(p), \bar g}} [h] \leq& C_0'\tau_r ||h||_{C^1(B_r(p), \bar g)} ||h||^2_{W^{1,2}(B_r(p), \bar g)},
	\end{align*}
	where $C_0'= C_1 + \theta_0 C_3$ is a positive constant depends only on $(B_{r}(p), \bar g)$.
	
	Combining both lower and upper bound estimates of $r_{_{B_r(p), \bar g}}$, we obtain
	\begin{align}\label{ineq:V-static_rigidity}
		\left( \frac{1}{8} \left(\inf_{B_r(p)} f \right) - \left( C_0\left(\sup_{B_r(p)} f \right) + C_0'\tau_r \right) ||h||_{C^1(B_r(p), \bar g)} \right) ||h||^2_{W^{1,2}( B_r(p), \bar g)} \leq 0.
	\end{align}
	Take $$\varepsilon_0 := \frac{1}{N} \min \left\{\varepsilon_1,\ \frac{1}{8} \left( C_0\left(\sup_{B_r(p)} f \right) + C_0'\tau_r \right)^{-1} \left(\inf_{B_{r}(p)} f \right)  \right\} ,$$
	then for the metric $g$ satisfies
	\begin{align*}
		||g - \bar g||_{C^2(B_r(p), \bar g)} < \varepsilon_0,
	\end{align*}
	we have
	$$ ||h||_{C^1(B_r(p), \bar g)} \leq N ||g - \bar g||_{C^2(B_r(p), \bar g)} < N \varepsilon_0 < \frac{1}{8} \left( C_0\left(\sup_{B_r(p)} f \right) + C_0'\tau_r \right)^{-1} \left(\inf_{B_{r}(p)} f \right) .$$
	According to inequality (\ref{ineq:V-static_rigidity}), it implies $h$ vanishes identically on $B_r(p)$ and hence $\varphi^*g =  \bar g$, which shows that $\varphi: B_r(p) \rightarrow B_r(p)$ has to be an isometry. Therefore, the reverse of inequality (\ref{ineq:contrary_V-static_volume_comparison}) holds:
	\begin{align}
	\kappa(V_{B_r(p)}(g) - V_{B_r(p)} (\bar g)) \geq 0.
	\end{align}
	That is, the following volume comparison holds:
	\begin{itemize}
		\item if $\kappa < 0$, then $$V_{B_r(p)}(g) \leq V_{B_r(p)} (\bar g);$$
		\item if $\kappa > 0$, then $$V_{B_r(p)}(g) \geq V_{B_r(p)} (\bar g);$$
	\end{itemize} 
	with equality holds in either case if and only if the metric $g$ is isometric to $\bar g$.	
\end{proof}
\ \\


\section{Volume comparison for closed Einstein manifolds}

Suppose $(M^n, \bar g, f, \kappa)$ is closed $V$-static manifold, the functional $\mathscr{F}_{M, \bar g}$ introduced in the previous section can be simplified as
\begin{align}
\mathscr{F}_{M, \bar g}  [g] = \int_M R(g) f dv_{\bar g} - 2\kappa V_M (g).
\end{align}
According to Proposition \ref{prop:F_critical_pt}, the metric $\bar g$ is still a critical point of $\mathscr{F}_{M, \bar g}$. However, it is obvious that this functional is not compatible with actions of dilations, which would cause subtle issues in its second variation. Geometrically speaking, dilations introduce additional degeneracy besides actions of diffeomorphisms, since they make no essential change to the geometry of the manifold. In order to obtain volume comparison for closed manifolds, we need to construct a new functional instead, which is invariant under dilations.\\

\begin{definition}
	Suppose $(M^n, \bar g, f, \kappa)$ is an $n$-dimensional closed $V$-static manifold, we define the functional
	\begin{align}
	\mathscr{G}_{M, \bar g} [g] := \left(V_M (g) \right)^{\frac{2}{n}}\int_M R(g) f dv_{\bar g}
	\end{align}
	for any Riemannian metric $g$ on $M$.
\end{definition} 

	Obviously, this functional is dilation-invariant:
	$$\mathscr{G}_{M, \bar g} [c^2 g] = \left(V_M (c^2 g) \right)^{\frac{2}{n}}\int_M R(c^2 g) f dv_{\bar g}  = \mathscr{G}_{M, \bar g} [g]$$
	for any constant $c \neq 0$. \\

	Now we focus on a special type of $V$-static metrics: Einstein metrics. According to the $V$-static equation (\ref{eqn:V_static}), we get 
	$$\gamma_{\bar g}^* 1 = - Ric_{\bar g} = \kappa \bar g$$
	by taking the function $f$ to be constantly $1$ on $M$. This means, $(M^n, \bar g, 1, \kappa)$ is a $V$-static space if and only if the metric $\bar g$ is an Einstein metric with scalar curvature $R_{\bar g} = - n \kappa$. Moreover, if we denote
	\begin{align}
		\lambda := \frac{R_{\bar g}}{n(n-1)},
	\end{align}
	then the Ricci curvature tensor is given by
	\begin{align*}
	Ric_{\bar g} = (n-1)\lambda \bar g 
	\end{align*}
	and $$\kappa = - (n-1) \lambda.$$
\vskip 0.2in
	
	As a functional particularly designed for $V$-static metrics, $\mathscr G_{M,\bar g}$ shares analogous variational properties with $\mathscr F_{M,\bar g}$:

\begin{proposition}\label{prop:G_first_variations}
	Suppose $(M^n, \bar g)$ is a closed Einstein manifold with Ricci curvature tensor
	\begin{align*}
		Ric_{\bar g} = (n-1) \lambda \bar g,
	\end{align*}
	then the metric $\bar g$ is a critical point of the functional $\mathscr{G}_{M, \bar g}$.
\end{proposition}

\begin{proof}
	From Proposition \ref{prop:int_scalar_first variation} and Lemma \ref{lem:vol_variations}, 
	\begin{align*}
	&D\mathscr{G}_{M, \bar g} \cdot h \\
	=&\left(V_M (\bar g) \right)^{\frac{2}{n}} \int_M \left(DR_{\bar g} \cdot h \right) dv_{\bar g} + \frac{2}{n}\left(V_M (\bar g) \right)^{\frac{2}{n} - 1} (DV_{M,\bar g} \cdot h) \int_M R_{\bar g} dv_{\bar g} \\
	=& \left(V_M (\bar g) \right)^{\frac{2}{n}} \left[ \int_M (\gamma_{\bar g}^*1)\ dv_{\bar g} + \frac{1}{n}R_{\bar g} \int_M \left(tr_{\bar g} h \right) dv_{\bar g} \right]\\
	=& - \left(V_M (\bar g) \right)^{\frac{2}{n}} \int_M \left\langle Ric_{\bar g} - (n-1)\lambda \bar g, h  \right \rangle_{\bar g} dv_{\bar g} \\
	=& 0,
	\end{align*}
	for any $h \in S_2(M)$.
\end{proof}

\vskip 0.2in
	
For second variation, we have
\begin{proposition}\label{prop:G_second_variations}
	Suppose $(M^n, g)$ is an Einstein manifold with Ricci curvature tensor
	\begin{align*}
		Ric_{\bar g} = (n-1) \lambda \bar g,
	\end{align*}
	then 
	\begin{align*}
		&D^2\mathscr{G}_{M, \bar g} \cdot (h,h) \notag\\
		=&-\frac{1}{2} \left(V_M (\bar g) \right)^{\frac{2}{n}} \int_M \left[  - \langle h_{_{TT}}, \Delta^{\bar g}_E h_{_{TT}} \rangle_{\bar g} + \frac{(n-1)(n + 2)}{n^2} \left(|d(tr_{\bar g} h)|_{\bar g}^2  - n \lambda \left(tr_{\bar g} h - \overline{tr_{\bar g} h} \right)^2  \right)\right]  dv_{\bar g}
	\end{align*}
	for any $h = h_{_{TT}} + \frac{1}{n} (tr_{\bar g} h )\bar g \in S_{2,\bar g}^{_{TT}} \oplus (C^\infty(M) \cdot \bar g)$.
\end{proposition}	 

	\begin{proof}
		From Lemma \ref{lem:scalar_variation_formulae}, \ref{lem:vol_variations} and Corollary \ref{cor:int_scalar_curv_second_var_divergence_gauge}, we obtain
			\begin{align*}
			&D^2\mathscr{G}_{M, \bar g} \cdot (h,h) \\
			=& \frac{2}{n} \left(V_M(\bar g)\right)^{\frac{2}{n} - 1} \left(D^2V_{M,\bar g} \cdot (h, h)\right)\int_M R_{\bar g} dv_{\bar g} + \frac{4}{n} \left(V_M(\bar g)\right)^{\frac{2}{n} - 1} \left(DV_{M,\bar g} \cdot h\right)\int_M \left( DR_{\bar g} \cdot h \right) dv_{\bar g} \\
			&- \frac{2(n-2)}{n^2} \left(V_M(\bar g)\right)^{\frac{2}{n} - 2} \left(DV_{M,\bar g} \cdot h\right)^2 \int_M R_{\bar g}  dv_{\bar g} + \left(V_M(\bar g)\right)^{\frac{2}{n}} \int_M \left(D^2R_{\bar g} \cdot (h, h)\right) dv_{\bar g}\\
			=& - \frac{1}{2} \left(V_M(\bar g)\right)^{\frac{2}{n}} \int_M \left(  - \langle h, \Delta_E^{\bar g} h \rangle_{\bar g} + \frac{n^2 - 2}{n^2} |d(tr_{\bar g} h)|_{\bar g}^2 - (n-1)\lambda (tr_{\bar g} h)^2  \right) dv_{\bar g}\\
			&- \frac{(n-1)(n+2)}{2n}\lambda \left(V_M(\bar g)\right)^{\frac{2}{n}}  \int_M (\overline {tr_{\bar g} h})^2 dv_{\bar g}.
			\end{align*}
			Now the decomposition
			\begin{align*}
				h = h_{_{TT}} + \frac{1}{n}(tr_{\bar g} h) \bar g
			\end{align*} 
			implies
			\begin{align*}
			&(D^2\mathscr{G}_{M, \bar g}) \cdot (h,h) \\
			=&-\frac{1}{2} \left(V_M (\bar g) \right)^{\frac{2}{n}} \int_M \left[ - \langle h_{_{TT}}, \Delta^{\bar g}_E h_{_{TT}}  \rangle_{\bar g} + \frac{(n-1)(n + 2)}{n^2} \left(|d(tr_{\bar g} h)|_{\bar g}^2  - n \lambda \left(tr_{\bar g} h - \overline{tr_{\bar g} h} \right)^2  \right)\right]  dv_{\bar g}.
			\end{align*}			
	\end{proof}
	
	As a key step of the proof for our volume comparison theorem, we need to give a characterization of the second variation of the functional $\mathscr{G}_{M,\bar g}$ at $\bar g$. This is closely related to spectrum problems of two operators: one is about the Einstein operator and it can be characterized by the stability of Einstein metrics, the other one is about the Laplace-Beltrami operator and its eigenvalue estimate is given by the well-known \emph{Lichnerowicz-Obata theorem} (cf. Theorem 5.1 in \cite{Li}).\\

\begin{lemma}[Lichnerowicz-Obata's eigenvalue estimate]\label{lem:Lich_Obata_eigenvalue_est}
	Suppose $(M^n, \bar g)$ is an $n$-dimensional closed Riemannian manifold with Ricci curvature tensor
	$$Ric_{\bar g} \geq (n-1) \lambda \bar g,$$
	where $\lambda > 0$ is a constant. Then for any function $u \in C^\infty(M)$ that is not identically a constant, we have
	\begin{align}
	\int_M |d u|^2 dv_{\bar g} \geq n \lambda \int_M (u - \overline u)^2 dv_{\bar g},
	\end{align}
	where equality holds if and only if $(M^n, \bar g)$ is isometric to the round sphere $\mathbb{S}^n \left( r \right)$ with radius $r = \frac{1}{\sqrt{\lambda}}$ and $u$ is a first eigenfunction of the Laplace-Beltrami operator.
\end{lemma}
\vskip 0.2in

Applying it to Proposition \ref{prop:G_second_variations}, immediately we get the non-positive definite property of the second variation of $\mathscr{G}_{M,\bar g}$ at $\bar g$:	
\begin{proposition}\label{prop:Einstein_second_var_non_positive_def}
	Suppose $(M^n, \bar g)$ is a closed stable Einstein manifold with Ricci curvature tensor
	\begin{align*}
	Ric_{\bar g} = (n-1) \lambda \bar g,
	\end{align*}
	then
	\begin{align*}
	&D^2\mathscr{G}_{M, \bar g} \cdot (h,h) \leq 0
	\end{align*}
	holds for any $h \in S_{2,\bar g}^{_{TT}}(M) \oplus ( C^\infty (M) \cdot \bar g)$. Moreover, equality holds if and only if
	\begin{itemize}
		\item $h \in \mathbb{R} \bar g \oplus  \ker \Delta_E^{\bar g} $, when $(M, \bar g)$ is not isometric to the round sphere up to a rescaling of the metric;
		\item $h \in (\mathbb{R}  \oplus E_{n\lambda}) \bar g $, when $(M, \bar g)$ is isometric to the round sphere $\mathbb{S}^n(r)$ with radius $r = \frac{1}{\sqrt \lambda}$,
	\end{itemize}
	where $$E_{n\lambda} := \{ u \in C^\infty(\mathbb{S}^n(r)): \Delta_{\mathbb{S}^n(r)} u + n\lambda u = 0 \}$$ is the space of first eigenfunctions for the spherical metric.\\
\end{proposition}
\begin{proof}
	Recall that the Einstein metric $\bar g$ is stable if and only if $(-\Delta_E^{\bar g})$ is a non-negative operator. Then the conclusion follows by applying this fact and Lemma \ref{lem:Lich_Obata_eigenvalue_est} to Proposition \ref{prop:G_second_variations}.
\end{proof}
\vskip 0.2in

Intuitively speaking, a \emph{slice} is a subset of metrics in the space of all Riemannian metrics, which is transverse to the orbit of diffeomorphism actions. The following refined version of slice theorem reveals the local structure of Einstein metrics in the space of all metrics. It seems not appearing in references before to the best of the author's knowledge and we hope it can be useful in problems involving Einstein metrics. The proof is standard, please refer to \cite{B-M, Viaclovsky}.
\begin{theorem}[Ebin-Palais slice theorem] \label{thm:slice_thm}
	Suppose $(M^n,\bar{g})$ is a closed $n$-dimensional Einstein manifold with Ricci curvature tensor
	\begin{align*}
		Ric_{\bar g} = (n-1) \lambda \bar g,
	\end{align*}
	where $\lambda \in \mathbb{R}$ is a constant. Let $\mathcal{M}$ be the space of all Riemannian metrics on $M$. There exists a local slice $\mathcal{S}_{\bar{g}}$ though $\bar g$ in $\mathcal{M}$. That is, for a fixed real number $p > n$, one can find a constant $\varepsilon_1 > 0$ such that for any metric $g \in \mathcal{M}$ with $||g-\bar{g}||_{W^{2,p}(M,\bar{g})} < \varepsilon_1$, there is a diffeomorphism $\varphi\in \mathscr{D}(M)$ with $\varphi^*g \in \mathcal{S}_{\bar{g}}$. Moreover, for a smooth local slice $\mathcal{S}_{\bar{g}}$, we have the decomposition
	$$S_2(M) =T_{\bar g} \mathcal{S}_{\bar g} \oplus (T_{\bar g} \mathcal{S}_{\bar g})^{\perp},$$ where the tangent space of $\mathcal{S}_{\bar g}$ at $\bar g$ and its $L^2$-orthogonal complement are given by
	$$T_{\bar g}\mathcal{S}_{\bar g} = S_{2,\bar g}^{_{TT}}(M) \oplus (C^\infty (M) \cdot \bar g)$$
	and 
	$$(T_{\bar g}\mathcal{S}_{\bar g})^{\perp}= \left\{\mathcal{L}_{\bar g}(X)\ | \ \langle X, \nabla_{\bar g} u \rangle_{L^2(M, \bar g)} = 0, \ \forall u\in C^{\infty}(M) \right\},$$
	when $(M^n, \bar g)$ is not isometric to the round sphere $\mathbb{S}^{n}(r)$ up to a scaling;
	$$T_{\bar g}\mathcal{S}_{\bar g} =  S_{2,\bar g}^{_{TT}}(M) \oplus (E_{n\lambda}^\perp  \cdot \bar g)$$
	and 
	$$(T_{\bar g} \mathcal{S}_{\bar g})^{\perp}= \left\{\mathcal{L}_{\bar g}(X)\ | \ \langle X, \nabla_{\bar g} u \rangle_{L^2(M, \bar g)} = 0, \ \forall u\in E_{n\lambda}^\perp \right\},$$
	when $(M^n,\bar{g})$ is isometric to the round sphere $\mathbb{S}^{n}(r)$ with $r=\frac{1}{\sqrt{\lambda}}$. Here
	$$E_{n\lambda} = \{ u \in C^\infty(\mathbb{S}^n(r)): \Delta_{\mathbb{S}^n(r)} u + n\lambda u = 0 \}$$ is the space of first eigenfunctions for the spherical metric.
\end{theorem}
\vskip .1in

Now we restrict the functional $\mathscr{G}_{M,\bar g}$ on a local slice $\mathcal{S}_{\bar g}$ and denote it to be
\begin{align*}
	\mathscr{G}_{M,\bar g}^{\mathcal S}: = \mathscr{G}_{M,\bar g}|_{_{\mathcal S}}.
\end{align*}
In order to investigate the local behavior of $\mathscr{G}_{M,\bar g}^{\mathcal S}$ near $\bar g$, we need the following \emph{Morse lemma on Banach manifold}:
\begin{lemma}[Morse lemma \cite{F-M}]\label{lem:Morse_lemma}
	Let $\mathcal{P}$ be a Banach manifold and $F : \mathcal{P} \rightarrow \mathbb{R}$ a $C^2$-function. Suppose $\mathcal{Q} \subset \mathcal{P}$ is a submanifold, $F = 0$ and $dF = 0$ on $\mathcal{Q}$ and that there is a smooth normal bundle neighborhood of $\mathcal{Q}$ such that if $\mathcal{E}_x$ is the normal complement to $T_x \mathcal{Q}$ in $T_x \mathcal{P}$ then $d^2 F (x)$ is weakly negative definite on $\mathcal{E}_x$ (\emph{i.e.} $d^2 F (x) (v,v) \leq 0$ with equality only if $v = 0$). Let $\langle, \rangle_x$ be a weak Riemannian structure with a smooth connection and assume that $F$ has a smooth $\langle, \rangle_x$-gradient, $Y(x)$. Assume $DY(x)$ maps $\mathcal{E}_x$ to $\mathcal{E}_x$ and is an isomorphism for $x \in \mathcal{Q}$. Then there is a neighborhood $U$ of $\mathcal{Q}$ such that $y \in U$, $F(y) \geq 0$ implies $y \in \mathcal{Q}$.
\end{lemma}
\vskip 0.2in

	Applying it to our case, we obtain the following local rigidity result:
\begin{proposition}\label{prop:Einstein_rigidity}
	Suppose $(M^n, \bar g)$ is a strictly stable Einstein manifold and $\mathcal{S}_{\bar g}$ is a local slice through $\bar g$. There is a neighborhood $U_{\bar g}$ of $\bar g$ in $\mathcal{S}_{\bar g}$ such that for any metric $\hat g_{_S} \in U_{\bar g}$ satisfies
	\begin{align*}
		\mathscr{G}_{M,\bar g}^{\mathcal S} [\hat g_{_S}] \geq \mathscr{G}_{M,\bar g}^{\mathcal S} [\bar g],
	\end{align*}
	there is a constant $c>0$ such that $\hat g_{_S} = c^2 \bar g$. 
\end{proposition}

\begin{proof}
		Let $$\widetilde{\mathcal{Q}}_{\bar g} : = \{ g_{_S} \in \mathcal S_{\bar g} : g_{_S} \ is\ Einstein.\}$$
		be the subset of the local slice $\mathcal{S}_{\bar g}$ consisted of Einstein metrics near the reference metric $\bar g$. By a result of Koiso (Corollary 3.4 in \cite{Koiso}), strict stability implies that $\bar g$ is rigid. That is, we can find a neighborhood $\widetilde U_{\bar g} \subseteq \mathcal{S}_{\bar g}$ of $\bar g$ such that
		$$\mathcal{Q}_{\bar g} := \widetilde{\mathcal{Q}}_{\bar g} \cap \widetilde{U}_{\bar g} = \{ g_{_S} \in \widetilde{U}_{\bar g} :\ g_{_S} = c^2 \bar g,\ c > 0 \}.$$
		In particular, the tangent space of $\mathcal{Q}_{\bar g}$ at $\bar g$ is given by
		$$T_{\bar g} \mathcal Q_{\bar g} = \mathbb{R} \bar g$$
		 and its $L^2$-orthogonal complement in $T_{\bar g} \mathcal{S}_{\bar g}$ can be expressed as 
		 $$\mathcal{E}_{\bar g} : = (T_{\bar g} \mathcal Q_{\bar g})^\perp = S_{2, \bar g}^{_{TT}} (M) \oplus ( \Psi_{\bar g}(M) \cdot \bar g)$$
		 due to Theorem \ref{thm:slice_thm}, where 
		 $$ \Psi_{\bar g}(M) = \left\{ u \in E_{n\lambda}^\perp : \int_M u\ dv_{\bar g} = 0 \right\}$$
		 if $\bar g$ is spherical and
		 $$ \Psi_{\bar g}(M) = \left\{ u \in C^\infty (M) : \int_M u\ dv_{\bar g} = 0 \right\}$$
		 otherwise.
		
		Consider a weak Riemannian structure on the local slice $\mathcal{S}_{\bar g}$,
		\begin{align*}
			\langle\langle \cdot , \cdot \rangle\rangle_{g_{_S}} : \ T_{g_{_S}} \mathcal{S}_{\bar g} \times T_{g_{_S}} \mathcal{S}_{\bar g} \rightarrow \mathbb{R}, \qquad \forall g_{_S} \in \mathcal{S}_{\bar g},
		\end{align*}
		which is defined to be
		\begin{align*}
			\langle\langle h, k \rangle\rangle_{g_{_S}} := \int_M \left[ \langle \nabla_{g_{_S}} h, \nabla_{g_{_S}} k\rangle_{g_{_S}} + \langle h, k\rangle_{g_{_S}} \right] dv_{g_{_S}}  = \int_M \langle (-\Delta_{g_{_S}} + 1) h, k\rangle_{g_{_S}}  dv_{g_{_S}}
		\end{align*}
		for any $h, k \in T_{g_{_S}} \mathcal{S}_{\bar g}$. According to \cite{Ebin}, it has a smooth connection. The $\langle\langle \ ,\  \rangle\rangle_{g_{_S}}$-gradient of $\mathscr{G}_{M, \bar g}^{\mathcal S}$ is given by 
		\begin{align*}
		Y(g_{_S}) =& P_{g_{_S}} ( - \Delta_{g_{_S}} + 1)^{-1} \left[ (V_M(g_{_S}))^{\frac{2}{n}}\left(\gamma_{g_{_S}}^*f_{g_{_S}} + \frac{1}{n} {g_{_S}} \left(V_{M}(g_{_S})\right)^{- \frac{n+2}{n}} \mathscr{G}_{M,\bar g}[{g_{_S}}] \right)\right],
		\end{align*} 
		where $P_{g_{_S}}$ is the orthogonal projection on $T_{g_{_S}} \mathcal{S}_{\bar g}$ and $f_{g_{_S}}$ is a smooth function on $M$ such that $dv_{\bar g} = f_{g_{_S}} dv_{g_{_S}}$. Obviously, $Y(g_{_S})$ is a smooth vector field on $\mathcal{S}_{\bar g}$. For simplicity, we denote 
		\begin{align*}
			Z({g_{_S}}): =  (V_M(g_{_S}))^{\frac{2}{n}} \left(\gamma_{g_{_S}}^*f_{g_{_S}} + \frac{1}{n} {g_{_S}} \left(V_{M}(g_{_S})\right)^{- \frac{n+2}{n}} \mathscr{G}_{M,\bar g}[{g_{_S}}] \right). 
		\end{align*}
		It is straightforward that $Z(\bar g) = 0$ and the linearization of $Z$ at $\bar g$ is given by
		\begin{align*}
		(DZ_{\bar g}) \cdot h =&\frac{1}{2} \left(V_{M}(\bar g)\right)^{\frac{2}{n}} \left( \Delta^{\bar g}_E h_{_{TT}}  + \frac{(n-1)(n+2)}{n^2} \bar g \left(\Delta_{\bar g} +  n\lambda \right)(tr_{\bar g} h - \overline{tr_{\bar g} h}) \right)\\
		=& D^2 \mathscr{G}_{M,\bar g}\cdot (h, \cdot)
		\end{align*}
		for any $h = h_{_{TT}} + \frac{1}{n} (tr_{\bar g} h) \bar g \in \mathcal{E}_{\bar g}$. Thus, 
		\begin{align*}
			(DY_{\bar g}) \cdot h =&  P_{\bar g} ( - \Delta_{\bar g} + 1)^{-1}\left( D^2 \mathscr{G}_{M,\bar g}\cdot (h, \cdot) \right)
		\end{align*} and $DY_{\bar g}$ is an isomorphism on $\mathcal{E}_{\bar g}$ due to the fact that $D^2 \mathscr{G}_{M,\bar g}^{\mathcal S}$ is strictly negative definite on $\mathcal{E}_{\bar g}$ from Proposition \ref{prop:Einstein_second_var_non_positive_def}. 
		
		Since the functional $\mathscr{G}_{M,\bar g}^{\mathcal S}$ is dilation-invariant, applying Lemma \ref{lem:Morse_lemma}, we can find a neighborhood $U_{\bar g} \subseteq \mathcal{S}_{\bar g}$ of $\bar g$ such that for any $\hat g_{_S} \in U_{\bar g}$ satisfies 
		$$\mathscr{G}_{M,\bar g}^{\mathcal S}[\hat g_{_S}] \geq \mathscr{G}_{M,\bar g}^{\mathcal S} [\bar g],$$
		it implies $\hat g_{_S} \in \mathcal Q_{\bar g}$. That is, $\hat g_{_S} = c^2 \bar g$ for some constant $c > 0$.			
\end{proof}
\vskip 0.2in

Now we can prove the volume comparison of Einstein manifolds with respect to scalar curvature:
\begin{proof}[Proof of Theorem B]	
	According to Theorem \ref{thm:slice_thm}, we can find a local slice $\mathcal{S}_{\bar g}$ through the reference metric $\bar g$. Moreover, there exists a constant $\varepsilon_0 >0$ such that for any metric $\tilde g$ with $$||\tilde g - \bar g||_{C^2(M, \bar g)} < \varepsilon_0,$$
	we can find a diffeomorphism $\psi \in \mathscr{D}(M)$ with the property that $\psi^* \tilde g \in U_{\bar g} \subseteq \mathcal{S}_{\bar g}$, where the subset $U_{\bar g}$ is given by Proposition \ref{prop:Einstein_rigidity}.
	
	For $\lambda \neq 0$, suppose $g$ is a metric on $M$ with scalar curvature
	 $$R_g \geq n(n-1)\lambda$$
	and
	 $$||g - \bar g||_{C^2(M, \bar g)} < \varepsilon_0.$$ In addition, we assume the reversed inequality of the claimed volume comparison holds:
	\begin{align}\label{ineq:Einstein_reverse_vol_comparison}
		\lambda \left( V_M(g) - V_M(\bar g) \right) \geq 0.
	\end{align}
	This implies there is a diffeomorphism $\varphi \in \mathscr{D}(M)$ such that $\varphi^*g \in U_{\bar g} \subseteq \mathcal{S}_{\bar g}$ and 
	\begin{align*}
		\mathscr{G}_{M,\bar g}^{\mathcal{S}}[\varphi^*g] = V_M(\varphi^*g)^{\frac{2}{n}} \int_M (R_g \circ \varphi) dv_{\bar g} \geq V_M(\bar g)^{\frac{2}{n}} \int_M R_{\bar g} dv_{\bar g} = \mathscr{G}_{M,\bar g}^{\mathcal{S}} [\bar g]
	\end{align*}
	due to our assumptions and the fact that $R_{\bar g} = n(n-1) \lambda$ is a constant.
	According to Proposition \ref{prop:Einstein_rigidity}, there exists a constant $c >0$ such that $\varphi^*g = c^2 \bar g$. 
	
	From our assumptions, 
	\begin{align*}
		R_{\varphi^* g} = c^{-2} R_{\bar g} \geq R_{\bar g} = n(n-1)\lambda
	\end{align*}
	and hence
	\begin{align*}
		\lambda (1- c) \geq 0.
	\end{align*}
	However, inequality (\ref{ineq:Einstein_reverse_vol_comparison}) suggests that
	\begin{align*}
		 0 \leq \lambda \left( V_M(\varphi^*g) - V_M(\bar g) \right) = \lambda (c^n - 1) V_M(\bar g),
	\end{align*}
	which implies that $\lambda (1 - c) \leq 0$. Therefore, we conclude $c =1$ and hence $\varphi^*g = \bar g$. That is, $(M^n,g)$ is isometric to $(M^n, \bar g)$ and this concludes the theorem. 	
\end{proof}
\ \\

With analogous techniques, we can prove the local rigidity of Ricci-flat manifolds:
\begin{proof}[Proof of Theorem C]
	Similar to the proof of Theorem B, we can find a constant $\varepsilon_0 > 0$ such that for any metric $\tilde g$ satisfies
	\begin{align*}
		||\tilde g - \bar g||_{C^2(M, \bar g)} < \varepsilon_0,
	\end{align*}
	there exists a diffeomorphism $\varphi \in \mathscr{D}(M)$ such that $\varphi^*g \in U_{\bar g} \subseteq \mathcal{S}_{\bar g}$, where $U_{\bar g}$ is given in Proposition \ref{prop:Einstein_rigidity}.
	
	Suppose $g$ is a Riemannian metric with scalar curvature
	\begin{align*}
		R_g \geq 0
	\end{align*}
	and
	\begin{align*}
	||g - \bar g||_{C^2(M, \bar g)} < \varepsilon_0,
	\end{align*}
	then there is a diffeomorphism $\varphi \in \mathscr{D}(M)$ such that
	\begin{align*}
		\mathscr{G}_{M,\bar g}^{\mathcal{S}} [\varphi^* g] = V_M(\varphi^* g)^{\frac{2}{n}} \int_M (R_g \circ \varphi) dv_{\bar g} \geq 0.
	\end{align*}
	However, 
	\begin{align*}
	\mathscr{G}_{M,\bar g}^{\mathcal{S}} [\bar g] = V_M(\bar g)^{\frac{2}{n}} \int_M R_{\bar g} dv_{\bar g} = 0
	\end{align*}
	and hence there is a constant $c > 0$ such that $\varphi^*g = c^2 \bar g$ due to $\bar g$ is strictly stable Ricci-flat and Proposition \ref{prop:Einstein_rigidity}. Now the conclusion follows.
\end{proof}
\vskip 0.2in

According to Proposition \ref{prop:G_second_variations}, the second variation of $\mathscr{G}_{M,\bar g}$ at an unstable Einstein metric $\bar g$ is indefinite and hence $\bar g$ is a saddle point instead of a local maximum. It suggests that the volume comparison may fail for unstable Einstein manifolds and counter-examples can be constructed. It is well-known that a product of positive Einstein manifolds with identical Einstein constants is still Einstein but unstable (cf. \cite{Kroncke}). Due to this reason and its simple structure, it can be our first choice. \\

The following example is constructed by Macbeth (\cite{Macbeth}), which shows the stability assumption is necessary for our volume comparison theorem:
\begin{proposition}\label{prop:heather_example}
	There is a family of metrics $\{g_t\}_{t \in [0,1)}$ on $\mathbb{S}^2\times \mathbb{S}^2$ such that 
	\begin{itemize}
		\item $g_0$ is the canonical product metric on $\mathbb{S}^2\times \mathbb{S}^2$,
		\item $R_{g_t} = R_{g_{_{\mathbb{S}^2\times \mathbb{S}^2}}} = 4$, for all $t \in [0,1)$,
		\item $V_M(g_t) > V_M(g_{_{\mathbb{S}^2\times \mathbb{S}^2}})$ for all $t\in (0, 1)$.
	\end{itemize}
\end{proposition}
\begin{proof}
	Let $$g_t = (1+t)^{-1} g^1_{_{\mathbb{S}^2}} + (1-t)^{-1} g^2_{_{\mathbb{S}^2}}$$ with $t \in [0,1)$, where $g^i_{_{\mathbb{S}^2}}$ is the canonical metric on the $i^{th}$-$\mathbb{S}^2$ factor, $i=1,2$. It is easy to see that their scalar curvature is given by
	$$R_{g_t} = 2 (1+t) + 2 (1-t) = 4$$ for all $t \in [0,1)$. However, its volume is 
	$$V_{\mathbb{S}^2 \times \mathbb{S}^2} (g_t) = (1-t^2)^{-1} V_{\mathbb{S}^2 \times \mathbb{S}^2}(\bar g) > V_{\mathbb{S}^2 \times \mathbb{S}^2}(\bar g).$$
\end{proof}

It is straightforward to generalize this example to more general product cases. It would be interesting to see whether we can find an explicit example of unstable Einstein manifold, which is not of this type but the volume comparison fails. \\

\vskip 0.5in

\appendix

\section{Equivalence of Schoen's conjectures}

In this appendix, we show that two well-known conjectures proposed by Schoen \cite{Schoen} on hyperbolic manifolds actually are equivalent to each other. We believe the proof is known to experts. Unfortunately, we could not find an appropriate reference. Thus we present a proof here for readers who are interested in it.\\
  
  We start with a well-known concept in conformal geometry (cf. \cite{Viaclovsky}):
\begin{definition}
	For $n \geq 3$, let $(M^n, g)$ be a connected closed $n$-dimensional Riemannian manifold. The \emph{Yamabe constant} of the conformal class $[g]$ is defined to be 
	$$Y(M^n, [g]) := \inf_{g \in[g]} \frac{\int_M R_g dv_g}{(V_M(g))^{\frac{n-2}{n}}}. $$
	Moreover, we can define a min-max invariant
	$$Y(M^n) := \sup_{[g]}Y(M^n, [g])$$
	called \emph{Yamabe invariant} or \emph{$\sigma$-invariant}.
\end{definition}
\vskip 0.2in

It is well-known that  
$$Y(M^n) \leq Y(\mathbb{S}^n)$$
for any closed smooth manifold $M^n$ and the canonical spherical metric achieves the Yamabe invariant of $\mathbb{S}^n$. For a given closed hyperbolic manifold with dimension at least three, its hyperbolic metric is unique up to a dilation due to the well-known \emph{Mostow rigidity theorem} (cf. Theorem C.0 in \cite{Benedetti-Petronio}). Similar to the spherical case, Schoen conjectures that its Yamabe invariant is achieved by the canonical hyperbolic metric \cite{Schoen}:
\newtheorem*{SHYC}{\bf Conjecture A}
\begin{SHYC}[Schoen's hyperbolic Yamabe invariant conjecture]
	For $n\geq 3$, suppose $(M^n, \bar g)$ is an $n$-dimensional closed hyperbolic manifold, then
	$$Y(M^n) = Y(M^n, [\bar g]).$$
i.e. The Yamabe invariant is achieved by its canonical hyperbolic metric.
\end{SHYC}
\vskip 0.2in

Another conjecture about closed hyperbolic manifolds is the following one concerning volume comparison, which is also referred to be \emph{Schoen's conjecture}:
\newtheorem*{SHVC}{\bf Conjecture B}
\begin{SHVC}[Schoen's hyperbolic volume comparison conjecture]
	For $n\geq 3$, suppose $(M^n, \bar g)$ is an $n$-dimensional closed hyperbolic manifold, then for any metric $g$ on $M$ with scalar curvature $$R_g \geq R_{\bar g},$$ its volume satisfies that $$V_M(g) \geq V_M(\bar g).$$
\end{SHVC}
\vskip 0.2in

Obviously, Conjecture A involves all metrics on the given hyperbolic manifold and in general it is difficult to solve. Conjecture B only involves the comparison of a special metric with the reference metric, which seems easier to solve than Conjecture A.
However, Conjecture A and B are in fact equivalent to each other and hence they are equally difficult in this sense. The bright side of this equivalence is that we only need to solve Conjecture B then Conjecture A would hold automatically. This seems to be a promising approach to Conjecture A.\\  

In the rest of the appendix, we will show the equivalence of Conjecture A and B.\\  

We first show Conjecture A implies B. In order to do this, we need the following lemma adapted from an observation of Kobayashi \cite{Kobayashi}:
\begin{lemma}\label{lem:Yamabe_constant_comparison}
	Let $(M^n, g)$ be a closed manifold and $Y(M^n, [g])$ be the Yamabe constant of conformal class $[g]$. Then 
	$$- \left(\int_M |R_g^-|^{\frac{n}{2}} dv_g\right)^{\frac{2}{n}} \leq Y(M^n, [g]) \leq \left(\int_M |R_g^+|^{\frac{n}{2}} dv_g\right)^{\frac{2}{n}},$$
	where $R_g^+ := \max\{ R_g, 0\}$ and $R_g^- := \max\{ -R_g, 0\}$.
\end{lemma}

\begin{proof}
	By the conformal transformation law of scalar curvature
	$$Y(M^n, [g]) = \inf_{u > 0} \frac{ \int_M \left( a|\nabla_g u|_g^2 + R_g u^2 \right)dv_g}{\left(\int_M u^{\frac{2n}{n-2}} dv_g\right)^{\frac{n-2}{n}}},$$
	where $a := \frac{4(n-1)}{n-2}$. Then we have
	\begin{align*}
	Y(M^n, [g])
	\geq \inf_{u > 0} \frac{ \int_M  R_g u^2 dv_g}{\left(\int_M u^{\frac{2n}{n-2}} dv_g\right)^{\frac{n-2}{n}}} \geq - \inf_{u > 0} \frac{ \int_M R_g^- u^2 dv_g}{\left(\int_M u^{\frac{2n}{n-2}} dv_g\right)^{\frac{n-2}{n}}},
	\end{align*}
	since $R_g = R_g^+ - R_g^-$.
	By \emph{H\"older's inequality},
	$$\int_M  R_g^-  u^2 dv_g \leq \left( \int_M |R_g^-|^{\frac{n}{2}} dv_g\right)^{\frac{2}{n}}\left(\int_M u^{\frac{2n}{n-2}} dv_g\right)^{\frac{n-2}{n}}$$
	and hence
	$$Y(M^n, [g]) \geq - \left( \int_M |R_g^-|^{\frac{n}{2}} dv_g\right)^{\frac{2}{n}} . $$
	Similarly,
	$$Y(M^n, [g]) \leq \frac{ \int_M R_g dv_g}{\left(V_M(g)\right)^{\frac{n-2}{n}}} \leq \frac{ \int_M R_g^+ dv_g}{\left(V_M(g)\right)^{\frac{n-2}{n}}}.$$
	By \emph{H\"older's inequality},
	$$\int_M  R_g^+ dv_g \leq \left( \int_M |R_g^+|^{\frac{n}{2}} dv_g\right)^{\frac{2}{n}}\left(V_M(g)\right)^{\frac{n-2}{n}}$$
	and hence
	$$Y(M^n, [g]) \leq \left( \int_M |R_g^+|^{\frac{n}{2}} dv_g\right)^{\frac{2}{n}}.$$
\end{proof}

\vskip 0.2in

Immediately, it implies the following conformal volume comparison:
\begin{proposition}\label{prop:conformal_volume_comparison}
	Suppose $(M^n, \hat g)$ is a closed Riemannian manifold with strictly negative constant scalar curvature $R_{\hat g}$. Then for any metric $g\in [\hat g]$ with scalar curvature
	$$R_g \geq R_{\hat g},$$
	we have
	$$V_M(g) \geq V_M(\hat g).$$
\end{proposition}

\begin{proof}
	Since $R_{\hat g}$ is a strictly negative constant, then its Yamabe constant satisfies that $$Y(M^n, [\hat g]) < 0$$ and hence $\hat g$ is a Yamabe metric in the conformal class $[\hat g]$ due to the uniqueness of Yamabe metric of negative Yamabe constant. Thus,
	$$Y(M^n,[\hat g]) = R_{\hat g} (V_M(\hat g))^{\frac{2}{n}}.$$
	By Lemma \ref{lem:Yamabe_constant_comparison},
	$$(\min_M R_g) (V_M (g))^{\frac{2}{n}} \leq - \left(\int_M |R_g^-|^{\frac{n}{2}} dv_g\right)^{\frac{n}{2}} \leq Y(M^n, [\hat g]) = R_{\hat g} (V_M(\hat g))^{\frac{2}{n}}.$$
	Therefore,
	$$R_{\hat g} (V_M (g))^{\frac{2}{n}} \leq (\min_M R_g) (V_M (g))^{\frac{2}{n}} \leq R_{\hat g} (V_M(\hat g))^{\frac{2}{n}} $$
	and hence
	$$V_M(g) \geq V_M(\hat g).$$
\end{proof}

\vskip 0.2in

Now we are ready to show
\begin{proposition}
	$$\text{Conjecture A} \Rightarrow \text{Conjecture B}.$$ 
\end{proposition} 

\begin{proof}
	Let $(M^n, \bar g)$ be a closed hyperbolic manifold. Suppose $g$ is a metric on $M$ with scalar curvature 
	$$R_g \geq R_{\bar g}.$$
	We are going to show 
	$$V_M(g) \geq V_M(\bar g),$$
	assuming $\bar g$ achieves its Yamabe invariant $Y(M^n)$.
	
	From Conjecture A, the Yamabe constant of conformal class $[g]$ satisfies 
	$$Y(M^n, [g]) \leq Y(M^n) = Y(M^n, [\bar g]) < 0.$$
	Let $\hat g \in [g]$ be the unique Yamabe metric in $[g]$, which is normalized such that $R_{\hat g} = R_{\bar g}$. By Proposition \ref{prop:conformal_volume_comparison}, we have
	$$V_M(g) \geq V_M(\hat g).$$
	On the other hand, 
	$$R_{\hat g}V_M(\hat g) ^{\frac{2}{n}}= Y(M^n, [g]) \leq Y(M^n) = Y(M^n,[\bar g]) = R_{\bar g}V_M(\bar g)^{\frac{2}{n}},$$
	which implies
	$$V_M(\hat g) \geq V_M(\bar g).$$
	Therefore,
	$$V_M(g) \geq V_M(\hat g) \geq V_M(\bar g)$$
	and hence Conjecture B holds.
\end{proof}

\vskip 0.2in

Next we show the reverse implication is also true:
\begin{proposition}
	$$\text{Conjecture B} \Rightarrow \text{Conjecture A}.$$
\end{proposition}

\begin{proof}
	Let $(M^n, \bar g)$ be a closed hyperbolic manifold. We are going to show that its Yamabe invariant satisfies
	$$Y(M^n) = Y(M^n,[\bar g]),$$
	assuming the volume comparison holds.
	
	We first recall a classic result of Gromov and Lawson which states that there is no metric with non-negative scalar curvature on a compact hyperbolic manifold (see Corollary A in \cite{G-L_2}). That means, the Yamabe invariant
	$$Y(M^n) \leq 0$$
	and there is no metric on $M$ with identically vanishing scalar curvature. Thus for any metric $g$ on $M$, the Yamabe constant of conformal class $[g]$ is strictly negative:
	$$Y(M^n, [g]) < 0.$$
	
	Let $\hat g$ be the Yamabe metric in the conformal class $[g]$ with $R_{\hat g} = R_{\bar g} < 0$. According to Conjecture B, we have
	$$V_M(\hat g) \geq V_M(\bar g).$$
	Therefore, the Yamabe constant of $[g]$ satisfies that
	$$Y(M^n, [g]) = \frac{\int_M R_{\hat g} dv_{\hat g}}{(V_M(\hat g))^{\frac{n-2}{2}}} = R_{\hat g} (V_M(\hat g))^{\frac{2}{n}} \leq R_{\bar g} (V_M(\bar g))^{\frac{2}{n}} = Y(M^n,[\bar g]) .$$
	Since $g$ is arbitrary, we conclude
	$$Y(M^n) = \sup_{[g]} Y(M^n, [g]) = Y(M^n,[\bar g])$$
	and hence Conjecture A holds.
\end{proof}
\vskip 0.2in

In summary, we have the equivalence of Schoen's conjectures A and B:
\begin{theorem}\label{prop:SHVC_equiv_SHYC}
	$$\text{Conjecture A} \Leftrightarrow \text{Conjecture B}.$$
\end{theorem}
\ \\

\bibliographystyle{amsplain}

\end{document}